\documentclass[a4paper,11pt]{article}

\usepackage{amssymb,amsmath,amsfonts,latexsym,amsthm}
\usepackage[mathscr]{eucal}
\usepackage{verbatim}
\usepackage{amscd}

\textwidth=17cm \textheight 23,5cm \oddsidemargin=0pt
\small\normalsize

\newtheorem{proposition}{Proposition}
\newtheorem{lemma}{Lemma}

\newtheorem{theorem}{Theorem}
\newtheorem{corollary}{Corollary}

\title{On symmetry group of Mollard code\thanks{
The  authors are supported by the Grant
the Russian Scientific Fund 14-11-00555. \newline
I. Yu. Mogilnykh and F. I. Solov'eva   are with the Sobolev Institute
of Mathematics and Novosibirsk State University, Novosibirsk,
Russia (emails:~\{ivmog,sol\}@math.nsc.ru).}
}

\author{ I. Yu. Mogilnykh, F. I. Solov'eva}

\begin{document}

\maketitle

\begin{abstract} For a pair of given binary perfect codes $C$
and $D$ of lengths 
 $t$ and $m$ respectively, the Mollard construction outputs
a perfect code $M(C,D)$ of length $tm+t+m$, having subcodes $C^1$
and $D^2$, that are obtained from codewords of $C$ and $D$
respectively by adding appropriate number of zeros. 
In this work we
 generalize of a result for symmetry
groups of
Vasil'ev codes \cite{AHS} and find the group
$Stab_{D^2}Sym(M(C,D))$. The result is preceded by and partially
based on a
 discussion of "linearity" of coordinate positions 
 (points) in a nonlinear
perfect code (non-projective Steiner triple system respectively).
\end{abstract}

\section{Introduction}\label{Intro}
There are not so many results on the structure of automorphism group of perfect codes, even in binary case.
The investigation of automorphism group and symmetry group of any code is important since these groups are measures of symmetries of the code structure.
 In the present paper we propose two invariants for measuring
  the "linearity" of coordinate positions
 and points in a nonlinear
perfect code and non-projective Steiner triple system not 
necessarily associated with perfect codes.
 The symmetry group of a perfect code is very closely related to the authomorphism group of its Steiner triple system. Beside of  the automorphism and symmetry groups, kernel, rank and Steiner triple system of a perfect code, the proposed in the paper these new invariants will be important tools in  further  research of structural properties of perfect codes.

The well-known result by Phelps \cite{Phelps} states that each finite group is isomorphic to the symmetry
group of a perfect binary code, whereas the result of Avgustinovich and Vasil'eva \cite{AV} established that the symmetry group of any perfect binary code of length $n$
is
isomorphic to the symmetry group of the subcode of all its codewords of weight $(n-1)/2$. However these results do not give the complete information on the structure of the symmetry and  automorphism
groups of perfect binary codes. The existence of  classes of perfect binary  codes with trivial
automorphism groups (nonsystematic and systematic) is considered in papers \cite{AS,M,Hed_tr_aut}.
It is well known \cite{MS} that the symmetry group $\mbox{Sym}(H)$ of the Hamming code $H$ of length $n$ is
isomorphic to the general linear group $GL(log(n+1), 2)$.
By the linearity of the Hamming code $H$
of length $n$, we have
$$|\mbox{Sym}(H)|=|\mbox{GL}(\log (n+1),2)|=n(n-1)(n-3)(n-7)\ldots(n-(n-1)/2).$$
The order of the automorphism group of an arbitrary nonlinear perfect
binary code was investigated by several authors, see the papers \cite{ST1,ST2,Mal_aut,Heden,Hed}.
The main definitions concerning this paper see in \cite{MS}.

\section{Notations and Definitions}\label{sec_Moll}

A collection $C$ of binary vectors
of length $n$ is called a {\it perfect} (1-perfect) code if any
binary vector is at distance 1 from exactly one codeword of $C$.
The perfect codes have length $n=2^m-1$, $2^{n-m}$
codewords and minimum distance 3. For every admissible  $n$ up to equivalence there is the
unique linear perfect code of length $n$, it is called the  {\it Hamming
code}.
A {\it Steiner triple system} is a collection of blocks
(subsets, called also triples) of size 3 of an $n$-element set, such that any
 unordered pair of distinct elements is exactly in one block.
  The set of codewords of weight 3 in a perfect code $C$, that contains the all-zero
 codeword is a Steiner triple system, which we denote ${\mathrm{STS}}(C)$. A Steiner triple system whose linear span is a Hamming code
 is called {\it projective}.

  With a Steiner triple system $S$ we
 associate a {\it Steiner quasigroup} $(P(S),\cdot)$ to be the point
set $P(S)$ of $S$ with a binary operation $\cdot$ such that:
$i\cdot j=k$, if $(i,j,k)$ is a triple of $S$ and $i\cdot i=i$. A {\it
Steiner loop} $(0\cup
 P(S),\star)$ with a binary operation $\star$ fulfills properties $i\star j=k$, if $(i,j,k)$ is a triple of $S$, $i\star i=0$ and $i\star
 0=i$.

Let $C$ and $D$ be two binary  one-error-correcting  codes of lengths $t$ and $m$ respectively.
Consider a representation for the Mollard construction \cite{Mol} for binary codes.

Consider the coordinate positions of the Mollard code $M(C,D)$ of
length $tm+t+m$ to be pairs $(r,s)$ from the set
$\{0,\ldots,t\}\times \{0,\ldots,m\}\setminus (0,0)$.

Let $f$ be an arbitrary function from $C$ to the set of binary vectors
of the vector space ${\bf F}_2^m$ of length $m$ and $p_1(z)$
and $p_2(z)$ be the generalized parity check functions:

$$p_1(z)=(\sum_{s=0}
^{m}z_{1,s},\ldots,\sum_{s=0}^{m}z_{t,s}),$$
 $$p_2(z)=(\sum_{r=0}^{
t}z_{r,1},\ldots,\sum_{r=0}^{t}z_{r,m}).$$
 The binary code $M(C,D)=\{z\in {\bf F}_2^{tm+t+m}: p_1(z)\in C, p_2(z)\in
 f(p_1(z))+D\}$ is called the Mollard code.
 In the case when $C$ and $D$ are perfect, the code $M(C,D)$ is perfect. Throughout the paper we consider the
 case when $f$ is the zero function, $C$ and $D$ are perfect codes, containing the all-zero words ${\bf 0}^t$ and ${\bf 0}^m$ respectively.

The Steiner triple system of $M(C,D)$ can be also defined using minimum weight codewords of the initial codes:
$$\mathrm{STS}(M(C,D))=\{x\in {\bf F}_2^{tm+t+m}: p_1(x)\in STS(C)\cup {\bf 0}^{t}, p_2(x)\in STS(D)\cup {\bf 0}^{m}\}\setminus \{{\bf 0}^{tm+t+m}\}.$$
We use the following convenient partition for the Steiner triple system of Mollard code

\begin{equation}\label{MSTSpart}\mathrm{STS}(M(C,D))=\bigcup_{k,p\in \{0,3\}} T_{kp} \end{equation}

 where
$$T_{00}=\{((r,0),(r,s), (0,s)): r\in \{1,\ldots,t\}, s\in \{1,\ldots,m\}\};$$
$$T_{33}=\{((r,s), (r',s'), (r'',s'')): (r, r', r'')\in \mathrm{STS}(C), (s, s', s'') \in \mathrm{STS}(D)\};$$
 $$T_{30}=\{ ((r,0),(r',s), (r'',s)): (r, r', r'')\in \mathrm{STS}(C), s\in \{0,\ldots,m\}\};$$
$$T_{03}=\{((r,s), (r,s'), (0,s'')): (s, s', s'')\in \mathrm{STS}(D), r\in \{0,\ldots,t\}\}.$$


Let $x$ and $y$ be codewords of $C$ and $D$ respectively. Denote by $x^{1}$
and $y^{2}$ codewords of $M(C,D)$ such that

    $$ (x^1)_{r0}=x_r, \mbox{for } r\in\{1,\ldots,t\}\mbox{ and } (y^2)_{0s}=y_s, \mbox{for } s\in\{1,\ldots,m\}$$
  with
zeros in all positions from ${\{0,\ldots,t\}\times\{1,\ldots,m\}}$
and $\{1,\ldots,t\}$ $\times\{0,\ldots,m\}$ respectively. Note that
$M(C,D)$ contains the codes $C$ and $D$ as the subcodes
$C^{1}=\{x^{1}: x \in C\}$ and $D^{2}=\{y^{2}: y \in D\}$
respectively.

  We also use a traditional representation of the Mollard code $M(C,D)$ using its subcodes $C^{1}$, $D^{2}$ and
    $\{e_{r,s}+e_{0,s}+e_{r,0}: r\in\{1,\ldots,t\}, s\in\{1,\ldots,m\}\}$,
    where $e_{r,s}$ is a vector of weight one with one in the  coordinate position $(r,s)$:

\begin{lemma}\label{RepMol} Given a vector $z\in M(C,D)$ there are unique codewords $x\in C$ and $y\in D$ such that

$$z={x}^{1}+{y}^2+\sum_{(r,s): z_{r,s}=1}(e_{r,s}+e_{0,s}+e_{r,0}).$$
\end{lemma}

Recall that {\it the dual} $C^{\perp}$ of a code $C$ is a
collection of all binary vectors $x$ such that
$\sum_{i=1,\ldots,n}x_ic_i=0$$\mbox{(mod 2)}$ for any codeword $c$
of $C$. For perfect codes $C$ and $D$, the dual of
the Mollard code $M(C,D)$ can be described in the following way:

\begin{equation}\label{Mollard_Dual}(M(C,D))^{\perp}=\{z:p_1(z)\in C^{\perp}, p_2(z)\in D^{\perp} \}.\end{equation}

{\it The rank} $\mathrm{rk}(C)$ of a code $C$ is defined to be the dimension of its linear span over ${\bf F}_2$. {\it The kernel} of the code is defined to be the subspace $\mathrm{Ker}(C)=\{x\in C: x+C=C\}$.
  The rank and kernel are important code invariants. Due to the construction, the Mollard code preserves many properties and characteristics of the initial codes $C$ and $D$, in particular, we have the iterative formulas for the size of kernel and rank:
$$dim(\mathrm{Ker}(M(C,D)))=dim(\mathrm{Ker}(C))+dim(\mathrm{Ker}(D))+tm$$
$$\mathrm{rk}(M(C,D))=\mathrm{rk}(C)+\mathrm{rk}(D).$$


The {\it symmetry group} $\mathrm{Sym}(C)$ of a code $C$ (sometimes being called
the permutational automorphism group or full automorphism group \cite{MS})
is the subgroup of permutations on $n$ elements preserving the code setwise:
$$\mathrm{Sym}(C)=\{\pi \in S_n: \pi(C)=C \}.$$

The {\it automorphism group} of a Steiner triple system of order $n$ is the subgroup of permutations on $n$ elements preserving the collection of blocks of the system.

It is well-known that the symmetry group stabilizes the dual of the code, kernel \cite{PhelpsRifa} and its Steiner triple system:
\begin{equation} \label{SymKer}
\mathrm{Sym}(C)\leq \mathrm{Sym}(\mathrm{Ker}(C)),
\end{equation}

\begin{equation} \label{SymRot}
\mathrm{Sym}(C)\leq \mathrm{Sym}(C^{\perp}),
\end{equation}

\begin{equation} \label{SymSTSC}
\mathrm{Sym}(C) \leq \mathrm{Aut}(\mathrm{STS}(C)).
\end{equation}

By $Stab_{C}G$ and $Stab_{(C)}G$ of a code $C$ we denote the
setwise and codeword-wise stabilizers of the set $C$ by the group
$G$ acting on a code $C^{\prime}$, $C\subseteq C^{\prime}$. Let
$C$ be a perfect subcode of $C'$ on the nonzero coordinates $N(C)$. We have the following obvious statement.

\begin{proposition}
We have that for any perfect code $C$ with a perfect subcode $C'$ on coordinates $N(C)$:
$$Stab_{N(C)}Sym(C')=Stab_{C}Sym(C'), \,\, Stab_{(N(C))}Sym(C')=Stab_{(C)}Sym(C').$$
\end{proposition}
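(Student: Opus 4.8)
The plan is to prove each of the two equalities by checking the two inclusions separately, relying on two elementary facts about the subcode. First I would record that since $C$ is a perfect code on the coordinate set $N(C)$, its minimum-weight codewords form the Steiner triple system $\mathrm{STS}(C)$ on point set $N(C)$; because every point of a Steiner triple system lies in a block, every coordinate of $N(C)$ occurs in the support of some codeword, so $\supp(C)=N(C)$. Second, by the meaning of ``perfect subcode on the coordinates $N(C)$'', the subcode is precisely $C=\{z\in C':\supp(z)\subseteq N(C)\}$ (this is exactly how $C^1$ and $D^2$ sit inside $M(C,D)$ by Lemma \ref{RepMol}).

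For the setwise equality, suppose $\pi\in Stab_{C}Sym(C')$, so $\pi(C)=C$. Then $\pi(N(C))=\pi(\supp(C))=\supp(\pi(C))=\supp(C)=N(C)$, giving $\pi\in Stab_{N(C)}Sym(C')$. Conversely, if $\pi\in Stab_{N(C)}Sym(C')$ then $\pi$ fixes $N(C)$ and hence its complement setwise, so for $z\in C$ we have $\pi(z)\in C'$ and $\supp(\pi(z))=\pi(\supp(z))\subseteq N(C)$, whence $\pi(z)\in C$; applying the same to $\pi^{-1}$ yields $\pi(C)=C$. This proves $Stab_{N(C)}Sym(C')=Stab_{C}Sym(C')$.

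For the codeword-wise equality, the inclusion $Stab_{(N(C))}Sym(C')\subseteq Stab_{(C)}Sym(C')$ is immediate: if $\pi$ fixes every point of $N(C)$ it fixes every coordinate of $\supp(z)\subseteq N(C)$ and carries the complement to itself, so $\pi(z)=z$ for each $z\in C$. The reverse inclusion is the only real content. Here I would take $\pi\in Stab_{(C)}Sym(C')$, so that $\pi$ fixes every codeword of $C$ as a vector; in particular it fixes every weight-$3$ codeword, i.e.\ every block of $\mathrm{STS}(C)$, setwise. Suppose for contradiction that $\pi(i)=j$ with $j\neq i$ for some $i\in N(C)$. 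Each block $\{i,a,b\}$ through $i$ is mapped to $\{j,\pi(a),\pi(b)\}$, and equality of these sets forces $j\in\{a,b\}$, so every block through $i$ must also contain $j$. But in a Steiner triple system the pair $\{i,j\}$ lies in a unique block, while the number of blocks through $i$ is $(|N(C)|-1)/2$. The main (and essentially only) obstacle is exactly this counting step: it requires more than one block through each point, i.e.\ $|N(C)|\ge 7$, which holds for every nondegenerate perfect code, the length-$3$ code $\{\zero,\u\}$ being the genuine exception. Under this hypothesis the two counts are incompatible, so $\pi(i)=i$ for all $i\in N(C)$, giving $\pi\in Stab_{(N(C))}Sym(C')$ and completing the proof.
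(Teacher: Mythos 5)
Your proof is sound, but there is nothing in the paper to compare it against: the authors give no argument whatsoever, introducing the proposition as ``the following obvious statement.'' Your writeup therefore supplies the missing proof, and its skeleton is the natural one: the identification $C=\{z\in C':\supp(z)\subseteq N(C)\}$, the fact that $\supp(C)=N(C)$, support-pushing for the setwise equality, and block-counting in $\mathrm{STS}(C)$ for the codeword-wise one. One strengthening worth making: the identification $C=\{z\in C':\supp(z)\subseteq N(C)\}$ need not be taken as definitional, since it follows from the hypotheses --- if $z\in C'$ has $\supp(z)\subseteq N(C)$, then perfectness of $C$ on $N(C)$ puts $z$ within Hamming distance $1$ of some $c\in C$, and the minimum distance $3$ of $C'$ forces $z=c$. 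Also note that your appeal to $\mathrm{STS}(C)$ covering all of $N(C)$ uses $\zero\in C$, which is a standing assumption of the paper.

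The more important point is your caveat, which is not a defect of your proof but of the proposition as stated. For $|N(C)|=3$ the second equality is genuinely false: take $C'$ to be the Hamming code of length $7$ and $C=\{\zero,\, e_1+e_2+e_3\}$ where $\{1,2,3\}$ is a triple of $\mathrm{STS}(C')$. Then $C$ is a perfect subcode on $N(C)=\{1,2,3\}$, and every $\pi\in Sym(C')$ that stabilizes $\{1,2,3\}$ setwise fixes both codewords of $C$ as vectors; but $Sym(C')\cong GL(3,2)$ induces the full symmetric group on this line, so $Stab_{(C)}Sym(C')$ has order $24$ while $Stab_{(N(C))}Sym(C')$ has order $4$. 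The same failure occurs for a length-$1$ subcode $C=\{\zero\}$, where $Stab_{(C)}Sym(C')=Sym(C')$. This is relevant to the paper itself, since Mollard components of length $1$ or $3$ are admissible (the length-$1$ case is precisely how Vasil'ev codes arise), so the proposition --- and any later appeal to it for $C^1$ or $D^2$ --- implicitly needs $|N(C)|\geq 7$, exactly the hypothesis your counting step isolates. Your first (setwise) equality, by contrast, holds in all cases, as your argument shows.
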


\section{Fundamental partition}\label{Fund_sec}

Given a perfect code $C$ of length $n$, one might
define {\it the fundamental partition} associated with $C$
\cite{AHS2}, \cite{PhelpsRifa} to be the partition of the coordinate set
$\{1,\ldots,n\}$ into subsets $I_0(C),\ldots,I_{2^{n-\mathrm{rk}(C)}-1}(C)$
such that for each $j\in\{0,\ldots,2^{n-\mathrm{rk}(C)}-1\}$ any codeword of
the dual code $C^{\perp}$ has the same values for coordinates with
indices from $I_j(C)$ \cite{AHS2}. By $I_0(C)$ we agree to denote the set of coordinates $\{i:x_i=0 \,\, \mbox{ for \,\, all } x\in C^{\perp}\}$
 which is of size $(n+1)/2^{n-\mathrm{rk}(C)}-1$ while $|I_j(C)|=(n+1)/2^{n-\mathrm{rk}(C)}$ for nonzero $j$. For the proof of the main result we
 essentially need the following fact:

\begin{lemma}\cite{PhelpsRifa}\label{fundPart}
Let $I_0(C),\ldots,I_{2^{n-\mathrm{rk}(C)}-1}(C)$ be the fundamental
partition associated with a perfect code $C$, $\pi \in Sym(C)$. Then
$$\pi(I_0(C))=I_0(C),$$
$$\mbox{for any } j\in \{1,\ldots,2^{n-\mathrm{rk}(C)}-1\} \mbox{ there is }j'\mbox{such that } \pi(I_j(C))=I_{j'}(C).$$
 \end{lemma}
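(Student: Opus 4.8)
The plan is to reformulate the fundamental partition as the set of fibres of a natural ``column'' map on coordinates and then exploit that $\mathrm{Sym}(C)$ stabilizes the dual code. First I would fix the action convention: a permutation $\pi\in S_n$ acts on a vector $x$ by $(\pi\cdot x)_i=x_{\pi^{-1}(i)}$. For each coordinate $i\in\{1,\ldots,n\}$ define its \emph{column} to be the functional $c_i\colon C^{\perp}\to\F_2$, $c_i(x)=x_i$. By the very definition of the fundamental partition, two coordinates $i,i'$ lie in the same block $I_j(C)$ precisely when $c_i=c_{i'}$, i.e. when every dual codeword takes equal values on $i$ and $i'$; the block $I_0(C)$ is exactly the fibre of the zero column $c_i\equiv 0$. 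Thus the blocks are the equivalence classes of the relation $i\sim i'\iff(\forall x\in C^{\perp})\,x_i=x_{i'}$.

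Next I would use \eqref{SymRot}, which gives $\pi(C^{\perp})=C^{\perp}$ for every $\pi\in\mathrm{Sym}(C)$, to show that $\pi$ preserves $\sim$. Suppose $i\sim i'$ and let $y\in C^{\perp}$ be arbitrary. Since $\pi^{-1}\cdot y\in C^{\perp}$ and $(\pi^{-1}\cdot y)_i=y_{\pi(i)}$, $(\pi^{-1}\cdot y)_{i'}=y_{\pi(i')}$, the hypothesis $i\sim i'$ applied to the dual codeword $\pi^{-1}\cdot y$ yields $y_{\pi(i)}=y_{\pi(i')}$. As $y$ was arbitrary, $\pi(i)\sim\pi(i')$. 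Running the same argument for $\pi^{-1}\in\mathrm{Sym}(C)$ gives the converse implication, so $\pi$ induces a well-defined bijection on the set of $\sim$-classes. Consequently $\pi$ carries each block $I_j(C)$ bijectively onto some block $I_{j'}(C)$, which is the second assertion.

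To pin down $I_0(C)$ I would argue that its defining column pattern is distinguished. If $i\in I_0(C)$ then $x_i=0$ for all $x\in C^{\perp}$; for any $y\in C^{\perp}$ we get $y_{\pi(i)}=(\pi^{-1}\cdot y)_i=0$ because $\pi^{-1}\cdot y\in C^{\perp}$, whence $\pi(i)\in I_0(C)$. Hence $\pi(I_0(C))\subseteq I_0(C)$, and bijectivity forces $\pi(I_0(C))=I_0(C)$. Alternatively, one may invoke the size computation recorded before the lemma: $I_0(C)$ is the unique block of cardinality $(n+1)/2^{n-\mathrm{rk}(C)}-1$, while every other block has cardinality $(n+1)/2^{n-\mathrm{rk}(C)}$, so any permutation of the blocks must fix the one of exceptional size.

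I do not expect a serious obstacle here, since the statement is essentially bookkeeping once the block-as-fibre description is in place. The only points requiring care are keeping the action of $\pi$ on coordinates consistent with its action on dual codewords, and verifying that $\sim$ is preserved in both directions, so that whole blocks, not merely subsets, are permuted.
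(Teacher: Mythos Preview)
Your argument is correct. The paper does not supply its own proof of this lemma; it is quoted from \cite{PhelpsRifa} and used as a black box. Your reformulation of the blocks $I_j(C)$ as fibres of the column map $i\mapsto (x\mapsto x_i)$ on $C^\perp$, together with the invariance \eqref{SymRot} of $C^\perp$ under $\mathrm{Sym}(C)$, is exactly the natural way to see the result, and the computation $(\pi^{-1}\cdot y)_i=y_{\pi(i)}$ is consistent with your stated action convention. Either of your two arguments for the fixing of $I_0(C)$ (the zero-column characterization or the distinguished cardinality) is sufficient.
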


 Avgustinovich at al.  \cite{AHS} considered a fundamental partition to show that any perfect code of rank $n-log_2(n+1)+2$
is obtained by Phelps construction. Utilizing a fundamental partition Heden in \cite{Heden} established an upper bound on the size of the symmetry group of a perfect code as a function of the rank of the code. In Section 5 we apply the idea of the work \cite{Heden} to prove our result on the symmetry group of a Mollard code.

From the description (\ref{Mollard_Dual}) of $(M(C,D))^{\perp}$ we obtain the following representation for the fundamental partition associated with the Mollard code $M(C,D)$:

$$I_{0}(M(C,D))=(I_0(C)\cup 0)\times (I_0(D)\cup 0)\setminus (0,0),$$
$$(I_0(C)\cup 0)\times I_{j'}(D),$$
$$I_{j}(C)\times (I_0(D)\cup 0),$$
$$I_j(C)\times I_{j'}(D), j=1,\ldots,t, j'=1,\ldots,m.$$

We also use the result of Heden (Lemma 8 of \cite{Heden}), which provides an inside view on the relationship of the code triples and the elements of
the fundamental partition:
\begin{lemma}\cite{Heden}\label{Heden_lemma}
Let $I_0(C),\ldots,I_{2^{n-\mathrm{rk}(C)}-1}(C)$ be the fundamental partition associated with $C$ of length $n$,
and  $(\{1,\ldots,n\},\star)$ be the Steiner loop associated with $STS(M(C,D))$. Then

1. for any $j\in\{0,\ldots,2^{n-\mathrm{rk}(C)}-1\}$, $r, r'\in I_j(C)$ we have that $r\star r'\in I_0(C)$;

2. for any $j, j'\in\{0,\ldots,2^{n-\mathrm{rk}(C)}-1\}$ there is a unique
$j\star' j'$ such that for $r\in I_j(C), r'\in I_{j'}(C)$ we have that
$r\star r'\in I_{j\star' j'}(C)$;

3. the set $\{I_0(C),\ldots,I_{2^{n-\mathrm{rk}(C)}-1}(C)\}$ with respect to the operation $\star'$ is an elementary abelian 2-group.
\end{lemma}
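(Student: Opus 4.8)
The plan is to reduce all three claims to the syndrome structure of the dual code $C^{\perp}$, but first I must make precise how the loop $\star$ of $STS(M(C,D))$ acts on the coordinates coming from $C$. Writing $t$ for the length of $C$, I identify each $r\in\{1,\ldots,t\}$ with the $C$-coordinate $(r,0)$ of $M(C,D)$, so that the fundamental partition $I_0(C),\ldots,I_{2^{t-\mathrm{rk}(C)}-1}(C)$ is regarded as a partition of the positions $\{(r,0):r\in\{1,\ldots,t\}\}$. The first step is to determine, from the partition (\ref{MSTSpart}), exactly which triples of $STS(M(C,D))$ pass through two such positions. Inspecting $T_{00},T_{33},T_{30},T_{03}$ shows that a triple meets the set $\{(r,0)\}$ in two (indeed three) points only when it lies in $T_{30}$ with $s=0$, i.e. is of the form $((r,0),(r',0),(r'',0))$ with $(r,r',r'')\in STS(C)$; the blocks of $T_{00}$ and of $T_{30}$ with $s\geq 1$ contain a single such position, while those of $T_{33},T_{03}$ contain none. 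Hence for distinct $r,r'$ the unique block of $STS(M(C,D))$ through $(r,0),(r',0)$ is $((r,0),(r',0),(r'',0))$ with $(r,r',r'')\in STS(C)$, which shows that $\star$ restricted to the $C$-coordinates is closed and coincides with the Steiner loop of $STS(C)$: $(r,0)\star(r',0)=(r'',0)$ with $(r,r',r'')\in STS(C)$.

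Having made this reduction, I would pass to the dual code. Put $k=t-\mathrm{rk}(C)=\dim C^{\perp}$ and fix a basis $x^{(1)},\ldots,x^{(k)}$ of $C^{\perp}$. Define the syndrome map $\phi\colon\{1,\ldots,t\}\to\mathbb{F}_2^{k}$ by $\phi(r)=(x^{(1)}_r,\ldots,x^{(k)}_r)$. By the very definition of the fundamental partition, $\phi$ is constant on each part, say $\phi\equiv v_j$ on $I_j(C)$, with $v_0=\zero$; moreover distinct parts carry distinct values (the parts are precisely the fibres of $\phi$), so $j\mapsto v_j$ is a bijection from the $2^{k}$ parts onto $\mathbb{F}_2^{k}$. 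The key linear relation is that for a triple $(r,r',r'')\in STS(C)$ one has $e_r+e_{r'}+e_{r''}\in C\subseteq (C^{\perp})^{\perp}$, whence $x^{(i)}_r+x^{(i)}_{r'}+x^{(i)}_{r''}=0$ for every $i$, i.e. $\phi(r)+\phi(r')=\phi(r'')$.

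With the reduction and this relation in hand the three assertions follow at once. For (2), if $r\in I_j(C)$ and $r'\in I_{j'}(C)$ are distinct then $\phi(r\star r')=v_j+v_{j'}$, and since $j\mapsto v_j$ is a bijection this value determines a unique part $I_{j''}(C)$; defining $j\star'j'$ to be this $j''$ makes the product depend only on $j,j'$, not on the representatives. For (1), taking $j=j'$ gives $\phi(r\star r')=v_j+v_j=\zero=v_0$, so $r\star r'\in I_0(C)$. For (3), the bijection $j\mapsto v_j$ transports $\star'$ to addition in $\mathbb{F}_2^{k}$, since $v_{j\star'j'}=v_j+v_{j'}$; hence $(\{I_0(C),\ldots,I_{2^{k}-1}(C)\},\star')$ is isomorphic to $(\mathbb{F}_2^{k},+)$, an elementary abelian $2$-group of order $2^{t-\mathrm{rk}(C)}$ with identity $I_0(C)$.

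I expect the only genuinely delicate point to be the reduction in the first paragraph: one must use the explicit block structure (\ref{MSTSpart}) to verify that no triple of $STS(M(C,D))$ leaves the $C$-coordinates once it meets two of them, so that $\star$ really does restrict to the $STS(C)$-loop. This is exactly what allows Heden's original argument (Lemma 8 of \cite{Heden}) to be run here; after it, everything is elementary linear algebra over $\mathbb{F}_2$ via the syndrome map, and no further properties of the Mollard construction are needed. A minor bookkeeping issue — the treatment of the loop identity $0$ and of the degenerate case $r=r'$, where $r\star r=0$ — is handled by adjoining $0$ to the class $I_0(C)$, consistently with $v_0=\zero$ being the group identity.
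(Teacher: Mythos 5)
Your proof is correct, but there is nothing in the paper to compare it against: the paper imports this statement from Lemma~8 of \cite{Heden} without reproducing a proof, so you have supplied an argument where the authors give only a citation. Your syndrome-map argument is the natural (and essentially Heden's) one: the parts of the fundamental partition are exactly the fibres of $\phi(r)=(x^{(1)}_r,\ldots,x^{(k)}_r)$, the weight-3 codewords $e_r+e_{r'}+e_{r''}\in C$ force $\phi(r\star r')=\phi(r)+\phi(r')$, and all three claims become linear algebra over $\mathbb{F}_2$, with $\star'$ transported to addition in $\mathbb{F}_2^{t-\mathrm{rk}(C)}$. What your write-up adds beyond the cited source is genuinely valuable: the statement as printed is garbled --- it declares $(\{1,\ldots,n\},\star)$ to be the Steiner loop of $STS(M(C,D))$, while $n$ is the length of $C$ and a Steiner loop should live on $\{0,1,\ldots,n\}$; Heden's original lemma concerns a single perfect code and the loop of $STS(C)$. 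Your first paragraph, checking via the partition (\ref{MSTSpart}) that the only blocks of $STS(M(C,D))$ through two positions $(r,0),(r',0)$ are the $T_{30}$ blocks with $s=0$, shows the big loop restricts on the $C$-coordinates exactly to the loop of $STS(C)$, which both repairs the statement and justifies the identification $r\cdot r'=r\star r'$ that the paper uses tacitly in the proof of Lemma~\ref{PropertiesO}. Your handling of the degenerate case $r=r'$ (where $r\star r=0$ is the loop identity, not an element of $I_0(C)$ as literally defined) by adjoining $0$ to $I_0(C)$ is the right convention and matches how the lemma is applied; no gaps remain.
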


\section{Linear coordinates}
The topic of this section does not concern symmetries of perfect codes directly.
Here we discuss the idea of linear coordinates in a perfect code. We consider two characteristics for  coordinates of a perfect code or points of a Steiner triple system, which we use later for describing the symmetry groups of Mollard codes or the automorphism groups of Mollard Steiner triple systems. In this section we underline some of their properties and derive an important corollary that we use in the study of the symmetry group of a Mollard code.

 The set of the triples of a Steiner triple system
\begin{equation}\{(i,j,k), (i,a,b), (c,j,a), (c,k,b)\}\end{equation}
is called a {\it Pasch configuration} or, shortly, {\it Pasch}.

For a Steiner triple system $S$ on elements $\{1,\ldots,n\}$ and $i\in \{1,\ldots,n\}$,
$n\equiv 1, 3\pmod {6}$, define $\nu_i(S)$ to be the number of different Pasch configurations, incident to $i$, i.~e. such that there are two triples of the Pasch containing the point $i$.

For a  perfect code $C$ of length $n$ and a coordinate position $i$ we consider $\mu_i(C)$ to be the number of code
triples from $Ker(C)$ containing $i$:

$$\mu_i(C)=|\{x \in \mathrm{STS}(C)\cap \mathrm{Ker}(C): i\in supp(x)\}|.$$

Obviously, two coordinate positions $i, j$  of $S$ or $C$ are in different orbits by $Aut(S)$ or $Sym(C)$ respectively if $\nu_i(S)\neq \nu_j(S)$ or $\mu_i(C)\neq \mu_j(C)$ respectively. We say that a coordinate $i$ is $\mu$-{\it linear} for a code $C$ of length $n$ if $\mu_i(C)$ takes the maximal possible value, i.~e.
$(n-1)/2$. We say that a point $i\in \{1,\ldots,n\}$ is $\nu$-{\it linear} for a Steiner triple system $S$ of order $n$ if $\nu_i(S)$ takes the maximal possible value, i.~e. $(n-1)(n-3)/4$. By $Lin_\nu(S)$ and $Lin_\mu(C)$ denote the sets of $\nu$-linear coordinates of $S$ and $\mu$-linear coordinates of $C$ respectively.

\begin{lemma}\label{quasi_identities}
Let $<\{1,\ldots,n\},\cdot>$ be a quasigroup associated with a Steiner triple system $S$ of any order $n$.
 Then the following statements are equivalent:\\
\noindent
1. $l\in Lin_{\nu}(S)$;\\
\noindent
$2. \mbox{ for any distinct } s, s'\in \{1,\ldots,n\}, s, s'\neq l \mbox{ we have } (l\cdot s)\cdot(l \cdot s')=s\cdot s';$
$\\
\noindent
3. \mbox{ for any distinct } s, s'\in \{1,\ldots,n\}, s, s'\neq l \mbox{ we have} \,\, l\cdot (s \cdot s')= (l\cdot s)\cdot s'.$
\end{lemma}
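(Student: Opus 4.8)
The plan is to treat the statement as the conjunction of a combinatorial equivalence $1\Leftrightarrow 2$, proved by counting the Pasch configurations through $l$, and a purely algebraic equivalence $2\Leftrightarrow 3$, proved from the defining identities of the Steiner quasigroup. Throughout I would use that $(\cdot)$ is commutative, idempotent, and satisfies $x\cdot(x\cdot y)=y$ for all $x,y$; in particular the map $\sigma_l\colon s\mapsto l\cdot s$ is a fixed-point-free involution of $\{1,\ldots,n\}\setminus\{l\}$ whose orbits $\{s,l\cdot s\}$ are exactly the pairs completing a triple with $l$.

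First I would pin down the extremal count. Since every point of a Pasch lies in exactly two of its four triples, a Pasch incident to $l$ is determined by its two triples through $l$ together with a pairing of the remaining four points into two pairs sharing a common completion point. As there are $(n-1)/2$ triples through $l$, this gives at most $2\binom{(n-1)/2}{2}=(n-1)(n-3)/4$ configurations, the value defining $\nu$-linearity. The heart of $1\Leftrightarrow 2$ is to observe that for a fixed pair of triples $(l,p,l\cdot p)$ and $(l,r,l\cdot r)$ the two admissible pairings close up into genuine Paschs precisely when $(l\cdot p)\cdot(l\cdot r)=p\cdot r$ and when $(l\cdot p)\cdot r=p\cdot(l\cdot r)$; using $\sigma_l(\sigma_l(x))=x$ I would rewrite both of these as instances of statement $2$, applied to the pairs $\{p,r\}$ and $\{p,l\cdot r\}$ respectively. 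Hence $\nu_l(S)$ attains its maximum if and only if statement $2$ holds for every pair lying in distinct $\sigma_l$-orbits; I would close this direction by checking that statement $2$ holds automatically when $s'=l\cdot s$ (both sides then equal $l$), so restricting to distinct orbits is harmless.

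For $2\Leftrightarrow 3$ I would argue formally. To derive $3$ from $2$, substitute $l\cdot s'$ for $s'$ in statement $2$ and simplify $l\cdot(l\cdot s')=s'$ to obtain $(l\cdot s)\cdot s'=s\cdot(l\cdot s')$; a second application of statement $2$ to the pair $\{s',s\cdot s'\}$, together with $(s\cdot s')\cdot s'=s$, identifies this common value with $l\cdot(s\cdot s')$, which is statement $3$. The reverse implication is symmetric: starting from $3$ one first gets $s\cdot(l\cdot s')=l\cdot(s\cdot s')$, and then the substitution $s'\mapsto l\cdot s'$ reconstructs $(l\cdot s)\cdot(l\cdot s')$ as $l\cdot(l\cdot(s\cdot s'))=s\cdot s'$.

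The step I expect to be the main obstacle is making the combinatorial correspondence in $1\Leftrightarrow 2$ exact rather than merely suggestive: I must verify that distinct pairs of $l$-triples (equivalently, distinct unions of two $\sigma_l$-orbits) yield distinct Paschs, that the two pairings of a single such pair are never the same configuration, and that no Pasch through $l$ is counted twice, so that the clean formula $\nu_l(S)=\sum(\text{number of pairings that close})$ with each summand at most $2$ is legitimate. A related subtlety I would flag is that statement $3$ should be read on nondegenerate pairs, those with $s\cdot s'\neq l$: when $s'=l\cdot s$ the two sides of $3$ reduce to $l$ and $l\cdot s$ and hence disagree, whereas statement $2$ stays valid there. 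Accordingly the equivalence $2\Leftrightarrow 3$ is carried out on pairs whose six associated points are distinct, which are precisely the pairs relevant to Pasch configurations.
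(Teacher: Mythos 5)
Your proof is correct, and it takes a partially different route from the paper's. The paper proves \emph{both} equivalences combinatorially through Pasch configurations: $1\Leftrightarrow 2$ comes from completing a pair of triples $(l,s,l\cdot s)$, $(l,s',l\cdot s')$ via the pairing that matches $s$ with $s'$ (forcing the triples $(s,s',s\cdot s')$ and $(l\cdot s,l\cdot s',s\cdot s')$), while $1\Leftrightarrow 3$ is obtained by re-running the same Pasch argument on the pair of triples through $l$ containing $s\cdot s'$ and $s$, i.e.\ essentially via the other admissible pairing. You instead prove $1\Leftrightarrow 2$ combinatorially and then get $2\Leftrightarrow 3$ by pure quasigroup algebra (substituting $l\cdot s'$ for $s'$, using $l\cdot(l\cdot x)=x$ and commutativity), which cleanly separates the counting from the identities. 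Your version also buys two things the paper glosses over. First, the counting is made exact: the maximum $(n-1)(n-3)/4$ equals \emph{twice} the number $\binom{(n-1)/2}{2}$ of pairs of triples through $l$, so attaining it requires both pairings of every pair to close; the paper exhibits only one closure condition per pair, and the gap is filled precisely by your observation that the second pairing's condition for $\{s,s'\}$ is the first pairing's condition for $\{s,l\cdot s'\}$. Second, you correctly flag that statement 3, read literally on all distinct $s,s'\neq l$, fails on the degenerate pairs $s'=l\cdot s$ (the two sides become $l$ and $l\cdot s$), whereas statement 2 holds there automatically; so statement 3 must be restricted to pairs with $s\cdot s'\neq l$. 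That is a genuine imprecision in the lemma as stated, which the paper's proof silently bypasses by only ever writing down nondegenerate configurations, and your nondegeneracy bookkeeping in the algebraic step is exactly what makes the equivalence rigorous.
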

\begin{proof}

A pair of different triples of $S$, containing $l\in Lin_{\nu}(S)$, e.g. $(l,s,l\cdot s)$
and $(l,s',l\cdot s')$ induces the following triples: $(s,s',s\cdot s')$ and $(l\cdot s, l\cdot s', s\cdot s')$.
 From the last block we have $(l\cdot s)\cdot(l \cdot s')=s\cdot s'$ for any different $s$ and $s'$
  if and only if $l$ is $\nu$-linear.

 Now consider the triples $(l,s\cdot s', l\cdot(s\cdot s'))$ and $(l,s,l\cdot s)$. The coordinate $l$ is $\nu$-linear for $S$ iff there are triples $(s\cdot s', s, s')$ and $(l\cdot (s\cdot s'),l\cdot s, s')$ in $S$ for any $s$ and $s'$. Then we see that  $l\cdot (s \cdot s')= (l\cdot s)\cdot s'$ iff $l$ is $\nu$-linear.

\end{proof}

The second statement of the previous lemma implies that $0\cup Lin_{\nu}(S)$ is the {\it nucleus} of a Steiner loop,
associated with a Steiner triple system $S$, which, in particular implies  that if $S$ is nonprojective, then $|Lin_{\nu}(S)|< (n-1)/2$  (see, for example \cite{Voj}).

\begin{theorem}

1. Let $C$ be a perfect code. Then we have  $$Lin_{\mu}(C)\subseteq Lin_{\nu}(STS(C)).$$
\noindent
2. A subdesign of a Steiner triple system $S$ on $Lin_{\nu}(S)$ is a projective Steiner triple system.\\
\noindent
3. A subcode of a perfect code $C$ on the coordinates $Lin_{\mu}(C)$ is a Hamming code.
\end{theorem}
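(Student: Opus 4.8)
The plan is to handle the three parts in order, with part~1 supplying the structural fact that powers the rest. The engine is an elementary observation about weight‑$4$ codewords of a perfect code: if $w\in C$ has weight $4$ with support $\{a,b,c,d\}$, then for \emph{every} splitting of the support into two pairs the two ``third points'' agree, i.e. $a\cdot b=c\cdot d$, $a\cdot c=b\cdot d$ and $a\cdot d=b\cdot c$ in the quasigroup of $STS(C)$. I would prove this by adding to $w$ the code triple $e_a+e_b+e_{a\cdot b}$: the sum is a codeword supported on $\{c,d,a\cdot b\}$, which cannot have weight $1$ because $C$ has minimum distance $3$, so it is itself a triple and $c\cdot d=a\cdot b$; the other identities follow by symmetry. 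For part~1, fix $i\in Lin_\mu(C)$, so that all $(n-1)/2$ triples through $i$ lie in $\mathrm{Ker}(C)$. For distinct $s,s'\neq i$, the two kernel triples $e_i+e_s+e_{i\cdot s}$ and $e_i+e_{s'}+e_{i\cdot s'}$ sum (the kernel being linear) to a weight‑$4$ kernel codeword $w$ with support $\{s,i\cdot s,s',i\cdot s'\}$, four distinct points since the triples meet only in $i$. The splitting $\{s,s'\},\{i\cdot s,i\cdot s'\}$ then yields $(i\cdot s)\cdot(i\cdot s')=s\cdot s'$ for all such $s,s'$, which is precisely statement~2 of Lemma~\ref{quasi_identities}; hence $i\in Lin_\nu(STS(C))$.

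Part~2 rests on the remark following Lemma~\ref{quasi_identities}: $0\cup Lin_\nu(S)$ is the nucleus of the Steiner loop $(0\cup P(S),\star)$. Since the nucleus of any loop is an associative subloop and an associative loop is a group, $0\cup Lin_\nu(S)$ is a subgroup of the Steiner loop, necessarily an elementary abelian $2$-group. In particular $Lin_\nu(S)$ is closed under $\cdot$, so its triples do form a subdesign, and a sub-Steiner-loop that is an elementary abelian $2$-group carries exactly the point--line design of a binary projective space, i.e. a Steiner triple system whose linear span is a Hamming code; by the paper's definition this sub-STS is projective.

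For part~3 I would first show that $L:=Lin_\mu(C)$ is closed, so that $L\cup 0$ is a subgroup of the nucleus. Take $i,j\in L$ and set $k=i\cdot j$; since $L\subseteq Lin_\nu(STS(C))$ by part~1, $k$ lies in the nucleus and associativity is available. For an arbitrary triple $(k,a,k\cdot a)$ through $k$, put $b=j\cdot a$, so $k\cdot a=i\cdot b$ by associativity, and use the telescoping identity
$$e_k+e_a+e_{k\cdot a}=(e_i+e_j+e_k)+(e_j+e_a+e_b)+(e_i+e_b+e_{k\cdot a})$$
to write it as a sum of three triples, each lying in $\mathrm{Ker}(C)$ because $i$ or $j$ lies in $L$. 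Hence every triple through $k$ is a kernel codeword, $k\in L$, and $L\cup 0$ is an elementary abelian $2$-group of some order $2^{\kappa}$. Its triples form a projective STS all of whose codewords lie in $\mathrm{Ker}(C)$, so their linear span $H$ is a Hamming code of length $2^{\kappa}-1$ placed on the coordinates $L$, with $H\subseteq\mathrm{Ker}(C)\subseteq C$. To identify $H$ with the full subcode on $L$, take any $x\in C$ with $\mathrm{supp}(x)\subseteq L$; perfectness of $H$ gives $h\in H$ with $d(x,h)\le 1$, so $x+h\in C$ is supported on $L$ of weight $\le 1$, forcing $x=h$ by minimum distance $3$. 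Thus the subcode on $Lin_\mu(C)$ equals the Hamming code $H$.

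I expect the main obstacle to be the bookkeeping in part~3: verifying that the telescoping identity does not degenerate --- the cases $a\in\{i,j\}$ reduce the triple through $k$ to $(i,j,k)$ itself, which is already a kernel codeword --- and checking that in the remaining case the points $i,j,k,a,b,k\cdot a$ are genuinely distinct so that each summand is an honest weight‑$3$ codeword and the cancellations are exactly as written. The weight‑$4$ structural lemma of part~1 and the nucleus argument of part~2 are comparatively short; the real effort is in assembling them into the closure of $Lin_\mu(C)$ and the concluding minimum‑distance argument.
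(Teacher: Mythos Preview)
Your proof is correct and tracks the paper's argument closely. Parts~1 and~3 are essentially identical to the paper: the ``weight-4 engine'' you isolate is exactly what the paper does implicitly (add the triple $\{r,s,r\cdot s\}$ to the kernel word $\{s,i\cdot s,r,i\cdot r\}$), and your telescoping decomposition in part~3 is the same Pasch identity the paper uses---the paper writes the Pasch through $(i\cdot j,(i\cdot j)\cdot r,r)$ and $(i\cdot j,j,i)$ and observes that the other three triples contain $i$ or $j$, hence lie in $\mathrm{Ker}(C)$, so the target triple does too.

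There are two small but genuine differences. In part~2 the paper proves closure of $Lin_\nu(S)$ by a direct quasigroup computation and then \emph{cites} Stinson--Wei for the fact that a Steiner triple system with maximum Pasch count is projective; your nucleus argument is self-contained and avoids that external reference, which is a mild improvement. In part~3 the paper stops at ``the subcode of $\mathrm{Ker}(C)$ generated by these triples is a Hamming code,'' whereas you go further and show, via the perfectness of $H$ on $Lin_\mu(C)$ and the minimum-distance-3 property, that this Hamming code is actually the \emph{entire} set of codewords of $C$ supported on $Lin_\mu(C)$; this sharpens the paper's conclusion and your argument for it is correct.
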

\begin{proof}
1. Let $i$ be $\mu$-linear coordinate. Then a pair of triples with the supports $\{i,s,i\cdot s\}$ and $\{i,r,i\cdot r\}$ incident to $i$ can be extended to a Pasch configuration. Indeed, since $i$ is $\mu$-linear, a codeword with the support $\{s,i\cdot s,r,i\cdot r\}$ is in $Ker(C)$,
which, being added with a code triple $\{r,s,r\cdot s\}$ gives a triple $\{i\cdot r,i\cdot s, r\cdot s\}$. Now it is easy to see that the four considered triples define a  Pasch.

2. Let $l$ and $l'$ be $\nu$-linear coordinates, $(l, l', l\cdot l')$ be a triple of $S$. We show that $l\cdot l'$ is $\nu$-linear.
Using the equalities from Lemma \ref{quasi_identities} for any distinct $s$ and $s'$ we have

$$((l\cdot l')\cdot s)\cdot((l\cdot l')\cdot s')=(l\cdot(l'\cdot s))\cdot(l\cdot (l'\cdot s'))=(l'\cdot s)\cdot(l'\cdot s')=s\cdot s',$$
which amounts to the fact that $l\cdot l'$ is $\nu$-linear.
The subdesign on $Lin_{\nu}(S)$ is a projective Steiner triple system, since it is known that a Steiner triple system with the maximum possible number of Pasch configurations is projective \cite{SW}.

3. Let $i$ and  $j$ be two $\mu$-linear coordinates. Then $i\cdot j$ is $\nu$-linear. For any fixed $r$ consider the following Pasch configuration:
\begin{eqnarray}
i\cdot j \, & (i\cdot j)\cdot r \, & \,\,\, r \nonumber \\
i\cdot j \, & j \, & \,\,\,  i\nonumber \, .\\
 \,  & r\cdot i \, & r\cdot i \nonumber
 \end{eqnarray}
A triple $(i\cdot j, (i\cdot j)\cdot r, r)$ is in Ker(C), since three remaining triples in the Pasch-configuration contain $i$ or $j$.
Now from the proven above we see that a subsystem of $STS(C)$ on the points $Lin_{\mu}(C)$ is projective, with all its triples being from $Ker(C)$. Therefore the subcode of $Ker(C)$ generated by these triples is a Hamming code.
\end{proof}

Finally, in the case of Mollard codes we have the following formulas for the number of triples in the kernel of Mollard code.
\begin{lemma}\cite{MS2}\label{lemma_muMollard}
Let $M(C,D)$ be a Mollard code obtained from
  perfect codes $C$ and $D$ of length $t$ and
$m$ respectively.
Then
\begin{enumerate}
    \item  $\mu_{(r,0)}(M(C,D))=\mu_{r}(C)(m+1)+m;$
\item $\mu_{(0,s)}(M(C,D))=\mu_{s}(D)(t+1)+t;$
\item $\mu_{(r,s)}(M(C,D))=1+2(\mu_{s}(D)+\mu_{r}(C)+\mu_{r}(C)\mu_{s}(D)).$
\end{enumerate}
\end{lemma}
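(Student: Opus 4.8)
The plan is to reduce every statement to counting, inside each class of the partition (\ref{MSTSpart}), those minimum weight codewords through a fixed coordinate that lie in $\mathrm{Ker}(M(C,D))$, and to make this tractable I would first obtain an explicit description of the kernel in terms of the parity maps $p_1$ and $p_2$. Concretely, I would prove that
$$\mathrm{Ker}(M(C,D))=\{z:\ p_1(z)\in \mathrm{Ker}(C),\ p_2(z)\in \mathrm{Ker}(D)\}.$$
Because $p_1$ and $p_2$ are linear and $M(C,D)=\{z:p_1(z)\in C,\ p_2(z)\in D\}$, the condition $z+z'\in M(C,D)$ for every $z'\in M(C,D)$ is equivalent, via $p_i(z+z')=p_i(z)+p_i(z')$, to $p_1(z)+C=C$ and $p_2(z)+D=D$, that is to $p_1(z)\in \mathrm{Ker}(C)$ and $p_2(z)\in \mathrm{Ker}(D)$. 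The only input is that the restriction of $p_1$ (resp. $p_2$) to $M(C,D)$ is onto $C$ (resp. $D$), which I would verify by placing a preimage of a given $c\in C$ on the positions $(r,0)$ and a preimage of a given $d\in D$ on the positions $(0,s)$. As a consistency check, this description returns exactly the announced value $\dim(\mathrm{Ker}(C))+\dim(\mathrm{Ker}(D))+tm$.

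Next, for a weight-$3$ codeword $w\in \mathrm{STS}(M(C,D))$ the above turns kernel membership into the computation of $p_1(w)$ and $p_2(w)$, which I would carry out class by class from the explicit supports in (\ref{MSTSpart}). A triple of $T_{00}$ gives $p_1(w)=\zero^{t}$ and $p_2(w)=\zero^{m}$, hence always lies in $\mathrm{Ker}(M(C,D))$; a triple of $T_{30}$ built on $\tau\in \mathrm{STS}(C)$ gives $p_1(w)=\tau$ and $p_2(w)=\zero^{m}$, so it is a kernel triple exactly when $\tau\in \mathrm{Ker}(C)$; symmetrically a triple of $T_{03}$ built on $\sigma\in \mathrm{STS}(D)$ is a kernel triple exactly when $\sigma\in \mathrm{Ker}(D)$; and a triple of $T_{33}$ built on $\tau$ and $\sigma$ gives $p_1(w)=\tau$, $p_2(w)=\sigma$, hence lies in the kernel exactly when $\tau\in \mathrm{Ker}(C)$ and $\sigma\in \mathrm{Ker}(D)$ simultaneously.

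Finally I would count. For the coordinate $(r,0)$ the kernel triples through it are the $m$ triples $((r,0),(r,s),(0,s))$ of $T_{00}$, together with, for each of the $\mu_{r}(C)$ kernel triples $\tau\in \mathrm{Ker}(C)$ through $r$, the $m+1$ triples of $T_{30}$ obtained by letting the free column index run over $\{0,\ldots,m\}$ with $(r,0)$ distinguished, which sums to $\mu_{r}(C)(m+1)+m$; the coordinate $(0,s)$ is symmetric. For a coordinate $(r,s)$ with $r,s\neq 0$ there is the single $T_{00}$ triple through it, the $2\mu_{r}(C)$ triples of $T_{30}$ (two choices of the point distinguished with column $0$), the $2\mu_{s}(D)$ triples of $T_{03}$, and the $2\mu_{r}(C)\mu_{s}(D)$ triples of $T_{33}$ (two ways to pair the remaining two points of $\tau$ with those of $\sigma$), giving $1+2(\mu_{s}(D)+\mu_{r}(C)+\mu_{r}(C)\mu_{s}(D))$.

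The conceptual crux is the kernel description of the first step, but the step most prone to error is this final enumeration, where the multiplicity $m+1$ and the three independent factors $2$ must be tracked carefully. A reliable guard is to check that the total number of triples through each coordinate, kernel or not, equals $(tm+t+m-1)/2$, as it must in a Steiner triple system of order $tm+t+m$.
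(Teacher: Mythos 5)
Your proposal is correct, but there is nothing in the paper to compare it against: the paper states this lemma with a citation to \cite{MS2} (``Transitive nonpropelinear perfect codes'') and gives no proof of it, so your argument stands as a self-contained proof rather than a variant of an internal one. On its own merits it is sound. The kernel description $\mathrm{Ker}(M(C,D))=\{z:\,p_1(z)\in \mathrm{Ker}(C),\ p_2(z)\in \mathrm{Ker}(D)\}$ holds exactly as you argue: linearity of $p_1,p_2$ reduces everything to surjectivity of their restrictions to $M(C,D)$, witnessed by the subcodes $C^1$ and $D^2$ (and since $\mathrm{Ker}(C)\subseteq C$, $\mathrm{Ker}(D)\subseteq D$, membership of $z$ in $M(C,D)$ is automatic), and it reproduces the dimension formula $\dim\mathrm{Ker}(C)+\dim\mathrm{Ker}(D)+tm$ because $(p_1,p_2)$ is onto $\mathbf{F}_2^t\times\mathbf{F}_2^m$ with kernel of dimension $tm$. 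The class-by-class evaluation of $(p_1(w),p_2(w))$ on $T_{00},T_{30},T_{03},T_{33}$ is right (in each class the repeated row or column indices cancel mod $2$), and the multiplicities are tracked correctly: for $(r,0)$, each kernel triple $\tau$ of $C$ through $r$ yields $m+1$ kernel triples of $T_{30}$ (the column index $s$ runs over $\{0,\ldots,m\}$, the case $s=0$ giving $\tau^1$ itself); for $(r,s)$ with $r,s\neq 0$, the factor $2$ in $2\mu_r(C)$ comes from the two choices of which remaining point of $\tau$ occupies column $0$, and the factor $2$ in $2\mu_r(C)\mu_s(D)$ from the two bijections between the remaining points of $\tau$ and of $\sigma$. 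Your proposed sanity check also passes: through $(r,s)$ the total number of triples is $1+(t-1)+(m-1)+\frac{(t-1)(m-1)}{2}=\frac{tm+t+m-1}{2}$, and through $(r,0)$ it is $m+\frac{(m+1)(t-1)}{2}=\frac{tm+t+m-1}{2}$, as required for a Steiner triple system of order $tm+t+m$.
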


\begin{corollary}\label{coroMu}
Let $M(C,D)$ be a Mollard code obtained from
  perfect codes $C$ and $D$ of length $t$ and
$m$ respectively. Then $\mu_{(r,s)}=\mu_{(r,0)}$ iff $s\in Lin_{\mu}(D)\cup 0$.

\end{corollary}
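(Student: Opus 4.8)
The plan is to reduce the claimed equivalence to a single arithmetic identity by substituting the explicit formulas of Lemma~\ref{lemma_muMollard}. Writing $a=\mu_r(C)$ and $b=\mu_s(D)$ for brevity, part~1 of that lemma gives $\mu_{(r,0)}=a(m+1)+m$ and part~3 gives $\mu_{(r,s)}=1+2(b+a+ab)$. Hence the condition $\mu_{(r,s)}=\mu_{(r,0)}$ is equivalent to the equation $1+2b+2a+2ab=a(m+1)+m$, and the whole corollary will follow once I determine exactly when this equation holds.

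The decisive step is to recognize that this equation factors. Moving everything to one side, the difference $a(m+1)+m-(1+2b+2a+2ab)$ simplifies to $a(m-1)+(m-1)-2b(a+1)$, which is precisely $(a+1)(m-1-2b)$. So the equation is equivalent to $(a+1)(m-1-2b)=0$. Since $a=\mu_r(C)\ge 0$, the factor $a+1$ is at least $1$ and therefore nonzero, so the identity holds if and only if $m-1-2b=0$, that is, $\mu_s(D)=(m-1)/2$. Note that $\mu_r(C)$ has dropped out entirely, so the resulting condition is uniform in $r$, as the statement requires.

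By the definition of $\mu$-linearity, $\mu_s(D)$ attains its maximal possible value $(m-1)/2$ exactly when $s\in Lin_\mu(D)$; this settles the substantive case $s\in\{1,\dots,m\}$. To finish I would separately treat $s=0$: here $(r,s)=(r,0)$ is the same coordinate, so $\mu_{(r,s)}=\mu_{(r,0)}$ holds trivially, which is exactly the reason the statement reads $Lin_\mu(D)\cup 0$ rather than $Lin_\mu(D)$.

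I do not expect a genuine obstacle in this argument: the entire content is the factorization $(\mu_r(C)+1)(m-1-2\mu_s(D))=0$, after which the equivalence is immediate from the definition of $Lin_\mu(D)$. The only points demanding a little care are to verify the factorization correctly (the term $-2b(a+1)$ must be tracked accurately) and to keep the trivial case $s=0$ bookkept separately from the main case so that the ``$\cup\,0$'' in the statement is properly accounted for.
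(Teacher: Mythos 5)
Your proof is correct and follows the same route the paper intends: the corollary is stated as an immediate consequence of Lemma~\ref{lemma_muMollard}, and your substitution of parts~1 and~3, the factorization $(\mu_r(C)+1)(m-1-2\mu_s(D))=0$, and the appeal to the definition of $Lin_\mu(D)$ (plus the trivial case $s=0$) is precisely the computation the paper leaves implicit.
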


\section{The group $Stab_{D^{2}}Sym(STS(M(C,D)))$}

For a permutation $\pi$ on
the coordinate positions of the code $C$ (the code $D$), denote by ${\mathcal Dub}_1(\pi)$
 (${\mathcal Dub}_2(\pi)$ respectively) a permutation
 of coordinates of $M(C,D)$ such that
$${\mathcal Dub}_1(\pi)(r,s)=(\pi(r),s) \mbox{ if r is nonzero}, \,\, {\mathcal Dub}_1(\pi)(0,s)=(0,s) \mbox{ otherwise; }$$
$${\mathcal Dub}_2(\pi)(r,s)=(r,\pi(s)) \mbox{ if s is nonzero}, \,\, {\mathcal Dub}_2(\pi)(r,0)=(r,0) \mbox{ otherwise }$$
(see \cite{S2005}, \cite{BorgesMogilnykhRifaSoloveva}).
For a collection $\Pi$ of permutations we agree that ${\mathcal Dub}_i(\Pi)$ denotes $\{{\mathcal Dub}_i(\pi):\pi \in \Pi\}$, $i=1, 2$.
We have the following statement:

\begin{lemma}
\label{DubDub}
Let $C$ and $D$ be two perfect codes. Then

$$Stab_{C^{1}}Sym(M(C,D))\cap Stab_{D^{2}}Sym(M(C,D))=$$
$${\mathcal Dub}_1(Sym(C))\times{\mathcal Dub}_2(Sym(D)).$$

\end{lemma}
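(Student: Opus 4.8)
The plan is to establish the two inclusions separately. For $\supseteq$ I would start from an arbitrary $\pi\in Sym(C)$ and $\sigma\in Sym(D)$ and verify directly that $\mathcal{D}ub_1(\pi)$ and $\mathcal{D}ub_2(\sigma)$ each lie in the intersection of stabilizers. Using the defining description $M(C,D)=\{z:p_1(z)\in C,\ p_2(z)\in D\}$, the map $\mathcal{D}ub_1(\pi)$ only permutes the rows $r\in\{1,\ldots,t\}$ and fixes the row $r=0$; summing along a fixed row shows that it sends the vector $p_1(z)$ to its $\pi$-image and leaves every column sum, hence $p_2(z)$, untouched, so membership in $M(C,D)$ is preserved because $\pi\in Sym(C)$. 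Symmetrically $\mathcal{D}ub_2(\sigma)$ preserves $M(C,D)$. Since $\mathcal{D}ub_1(\pi)$ acts only on the first index and fixes every coordinate $(0,s)$, it carries $C^1$ onto $C^1$ and fixes $D^2$ pointwise, and dually for $\mathcal{D}ub_2(\sigma)$. The two factors act on disjoint indices, so they commute and meet only in the identity; this makes the product on the right direct and contained in the left-hand side.

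For the reverse inclusion I would fix $\tau$ in the intersection and partition the coordinate set into the three blocks $A=\{(r,0):r\ge 1\}$, $B=\{(0,s):s\ge 1\}$ and the bulk $\Gamma=\{(r,s):r,s\ge 1\}$. Because every coordinate of a perfect code occurs in some weight-$3$ codeword, $A$ and $B$ are exactly the nonzero-coordinate sets $N(C^1)$ and $N(D^2)$. By the Proposition above, $\tau(C^1)=C^1$ and $\tau(D^2)=D^2$ force $\tau(A)=A$ and $\tau(B)=B$, and since $\tau$ is a bijection it then also preserves the complement $\Gamma$. I would record the induced permutations, writing $\tau(r,0)=(\pi(r),0)$ and $\tau(0,s)=(0,\sigma(s))$ to define $\pi$ on $\{1,\ldots,t\}$ and $\sigma$ on $\{1,\ldots,m\}$.

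The heart of the argument is to show that $\pi$ and $\sigma$ already pin down $\tau$ on the bulk $\Gamma$. Here I would use $\tau\in Aut(STS(M(C,D)))$ from (\ref{SymSTSC}) together with the partition (\ref{MSTSpart}): inspecting the four families shows that the triples meeting all three blocks $A$, $B$, $\Gamma$ are precisely those of $T_{00}$, every other triple realizing one of the patterns $\{\Gamma,\Gamma,\Gamma\}$, $\{A,A,A\}$, $\{B,B,B\}$, $\{A,\Gamma,\Gamma\}$ or $\{B,\Gamma,\Gamma\}$. As $\tau$ fixes each block setwise it preserves this incidence pattern, whence $\tau(T_{00})=T_{00}$. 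Since there is a unique $T_{00}$ triple containing a prescribed $A$-point and $B$-point, applying $\tau$ to $\{(r,0),(r,s),(0,s)\}$ shows its image is $\{(\pi(r),0),(\pi(r),\sigma(s)),(0,\sigma(s))\}$, forcing $\tau(r,s)=(\pi(r),\sigma(s))$; that is, $\tau=\mathcal{D}ub_1(\pi)\circ\mathcal{D}ub_2(\sigma)$ as a coordinate permutation. I expect this block-pattern analysis of $T_{00}$ to be the main obstacle, since it is the only place where the specific Mollard triple structure is genuinely exploited.

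Finally I would verify $\pi\in Sym(C)$ and $\sigma\in Sym(D)$. As $\tau$ acts on $A$ through $\pi$, it sends $x^1$ to $(\pi(x))^1$ for every $x\in C$; the hypothesis $\tau(C^1)=C^1$ then yields $\pi(C)=C$, and dually $\sigma(D)=D$. Hence $\tau\in\mathcal{D}ub_1(Sym(C))\times\mathcal{D}ub_2(Sym(D))$, which completes the reverse inclusion and, with the first paragraph, the proof.
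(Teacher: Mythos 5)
Your proof is correct and takes essentially the same route as the paper's: extract $\pi$ and $\sigma$ from the action on the coordinate sets of $C^1$ and $D^2$, then use the weight-3 codewords with supports $\{(r,0),(r,s),(0,s)\}$ (the $T_{00}$ triples) together with the uniqueness of the Steiner triple through two points to force $\tau(r,s)=(\pi(r),\sigma(s))$. Your block-pattern analysis of (\ref{MSTSpart}) and the explicit verification of the easy inclusion merely spell out steps the paper leaves as ``obvious.''
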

\begin{proof}
The inclusion of ${\mathcal Dub}_1(Sym(C))\times{\mathcal Dub}_2(Sym(D))$ into
the left hand side of the equality is obvious, see \cite{S2005}.
Let $\sigma$ be a permutation from $Stab_{C^{1}}Sym(M(C,D))\cap Stab_{D^{2}}Sym(M(C,D))$.
Consider $\pi$ to be a restriction of $\sigma$ on the nonzero coordinates of $C^1$, i.e. for any $r\in\{1,\ldots,t\}$ we have
$$\sigma(r,0)=(\pi(r),0), $$
which is equivalent to  $\sigma(c^1)=(\pi(c))^1$ for any $c\in C$.
We see that $(\pi(C))^1=\sigma(C^1)=C^1$ amounts to $\pi(C)=C$, so $\pi\in Sym(C)$.
Analogously we have that the restriction of $\sigma$ on the coordinates of $D^2$ is a permutation $\pi'\in Sym(D)$. Note that if $(r,0)$, $(0,s)$ are fixed by a permutation of coordinates from $M(C,D)$, then the coordinate $(r,s)$ is fixed, since there is a  codeword with the support $\{(r,0), (0,s), (r,s)\}$ in $M(C,D)$. This implies
that $\sigma$ must be equal to ${\mathcal Dub}_1(\pi){\mathcal Dub}_2(\pi')$.
\end{proof}

In this section we consider the structure of the setwise stabilizer $Stab_{D^{2}}Sym(STS(M(C,D)))$ (which we denote in what follows by $G$)
of the subcode $D^{2}$ in $Sym(M(C,D))$. The Mollard construction is a generalization of the Vasil'ev construction.
In \cite{AHS} the group of symmetries of Vasil'ev codes is investigated. In this section we obtain an extension of the result for Mollard codes.

Let ${\mathcal T}$ be a subgroup formed by the collection of symmetries $\tau$ of $G$ such that

\begin{equation}\label{PR1}
\mbox{ for any} \,\, r\in \{1,\ldots,t\}, \,\, \mbox{for any} \,\,  s\in \{0,\ldots,m\} \,\, \mbox{there exists} \,\,  s': \tau(r,s)=(r,s'),
\end{equation}

\begin{equation}\label{PR2}
\mbox{for any} \,\,  s\in \{1,\ldots,m\} \mbox{ we have }\tau(0,s)=(0,s).
\end{equation}

From Corollary \ref{coroMu} we obtain:
\begin{lemma}\label{RowOrbits}

 A symmetry $\tau\in {\mathcal T}$ setwise fixes the set of coordinates $\{(r,s):s\in0 \cup Lin_{\mu}(D)\}$ for any $r\in\{1,\ldots,t\}$.
\end{lemma}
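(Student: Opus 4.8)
The plan is to combine the row-wise action of $\tau$ forced by (\ref{PR1}) with the numerical description of $0\cup Lin_{\mu}(D)$ supplied by Corollary \ref{coroMu}. Fix $r\in\{1,\ldots,t\}$. By (\ref{PR1}) the map $\tau$ sends every coordinate $(r,s)$ to a coordinate $(r,s')$ in the same row; being a bijection on the $m+1$ coordinates of that row, it induces a permutation $\pi_{r}$ of the column set $\{0,\ldots,m\}$ with $\tau(r,s)=(r,\pi_{r}(s))$. Therefore the assertion for the row $r$ is equivalent to the single statement that $\pi_{r}$ stabilizes $0\cup Lin_{\mu}(D)$ setwise.

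To see this, I would read $0\cup Lin_{\mu}(D)$ as a level set of a coordinate invariant. By Corollary \ref{coroMu}, for the fixed $r$ one has $s\in 0\cup Lin_{\mu}(D)$ if and only if $\mu_{(r,s)}(M(C,D))=\mu_{(r,0)}(M(C,D))$; thus $0\cup Lin_{\mu}(D)$ is precisely the fiber over the value $\mu_{(r,0)}(M(C,D))$ of the function $s\mapsto \mu_{(r,s)}(M(C,D))$ on row $r$. Granting that $\tau$ preserves this invariant, i.e. $\mu_{\tau(r,s)}(M(C,D))=\mu_{(r,s)}(M(C,D))$ for all coordinates, the bijection $\pi_{r}$ carries every fiber of $s\mapsto\mu_{(r,s)}(M(C,D))$ onto itself; applying this to the fiber through the column $0$ yields $\pi_{r}(0\cup Lin_{\mu}(D))=0\cup Lin_{\mu}(D)$, which is exactly what is needed.

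The crux, and the step I expect to be the real obstacle, is the invariance $\mu_{\tau(r,s)}(M(C,D))=\mu_{(r,s)}(M(C,D))$. The invariant $\mu_i$ counts the triples of $Ker(M(C,D))$ through $i$, so it is evidently preserved by code symmetries, whereas $\tau$ belongs only to $Sym(STS(M(C,D)))=Aut(STS(M(C,D)))$, which a priori respects only the purely combinatorial Pasch count $\nu$. To bridge this gap I would argue inside the STS itself: for each $s\in\{1,\ldots,m\}$ the triple $\{(r,0),(r,s),(0,s)\}$ of $T_{00}$ is mapped by $\tau$, using (\ref{PR2}) to fix $(0,s)$, onto $\{(r,\pi_{r}(0)),(r,\pi_{r}(s)),(0,s)\}$, which is again a block and, by inspection of the partition (\ref{MSTSpart}), must lie in $T_{00}\cup T_{03}$ (the remaining classes $T_{30}$ and $T_{33}$ contain no point on the line $r=0$). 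Tracking how these blocks and the Pasch configurations they generate distribute the kernel triples along the row recovers $\mu_{(r,s)}(M(C,D))$ from data that $\tau$ visibly preserves, giving $\mu_{(r,\pi_{r}(s))}(M(C,D))=\mu_{(r,s)}(M(C,D))$; together with the two previous paragraphs this establishes the lemma.
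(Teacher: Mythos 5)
Your first two paragraphs reproduce exactly the argument the paper intends: by (\ref{PR1}) the symmetry $\tau$ acts on each row $r\times\{0,\ldots,m\}$, and Corollary \ref{coroMu} identifies $\{(r,s):s\in 0\cup Lin_{\mu}(D)\}$ as the level set, inside row $r$, of the invariant $i\mapsto\mu_i(M(C,D))$ through the column $0$; invariance of $\mu$ under $\tau$ then finishes the proof (this is all that lies behind the paper's one-line justification ``From Corollary \ref{coroMu} we obtain''). The step you single out as the crux, however, is not an obstacle, because $\mathcal{T}$ consists of \emph{code} symmetries, not merely automorphisms of the triple system. Despite the section heading, the group $G\supseteq\mathcal{T}$ is $Stab_{D^{2}}Sym(M(C,D))$: this is what the abstract states, and it is how $\tau$ is used throughout (e.g.\ in Lemma \ref{PropertiesO} the restriction of $\tau$ is a symmetry of the code $M(C,D_{\mu})$ and Lemma \ref{fundPart}, a statement about $Sym$, is applied to it). A code symmetry preserves the kernel and the set of weight-$3$ codewords by (\ref{SymKer}) and (\ref{SymSTSC}), hence preserves the count $\mu_i$ of kernel triples through a coordinate, i.e.\ $\mu_{\tau(i)}(M(C,D))=\mu_i(M(C,D))$. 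With that observation your first two paragraphs already constitute a complete proof, identical in substance to the paper's.

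Your third paragraph, read on its own terms (assuming only $\tau\in \mathrm{Aut}(\mathrm{STS}(M(C,D)))$), is a genuine gap. ``Tracking how these blocks and the Pasch configurations they generate distribute the kernel triples along the row'' is not an argument: membership of a triple in $\mathrm{Ker}(M(C,D))$ is a property of the code, not of its Steiner triple system, and an automorphism of the triple system has no a priori reason to preserve it. No bookkeeping of blocks and Pasch configurations can detect which triples lie in the kernel, because the kernel is simply not determined by the STS. This is precisely why the paper, in its analogous theorem for Steiner triple systems, abandons $\mu$ and $Lin_{\mu}$ altogether and works with the Pasch count $\nu$ and $Lin_{\nu}$, which \emph{are} STS invariants. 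So the correct repair is not to complete your sketch but to use the right definition of $\mathcal{T}$: once $\tau$ is a code symmetry, $\mu$-invariance is immediate from (\ref{SymKer}) and (\ref{SymSTSC}), and the lemma follows as in your first two paragraphs.
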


\begin{proposition}\label{OGroup}
The group ${\mathcal T}$ is an elementary abelian 2-group.
\end{proposition}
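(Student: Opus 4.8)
The plan is to reduce everything to showing that each $\tau\in{\mathcal T}$ is an involution: once we know $\tau^2=\mathrm{id}$ for every $\tau\in{\mathcal T}$, the group is automatically abelian (in any group where every element squares to the identity, $ab=(ab)^{-1}=b^{-1}a^{-1}=ba$), and hence ${\mathcal T}$ is an elementary abelian $2$-group. To set up, I would first use the defining properties (\ref{PR1}) and (\ref{PR2}) to put $\tau$ into a normal form. By (\ref{PR1}) the symmetry $\tau$ preserves each row $\{(r,s):s\in\{0,\ldots,m\}\}$ with $r\geq 1$, so it induces a permutation $\tau_r$ of the columns $\{0,\ldots,m\}$ via $\tau(r,s)=(r,\tau_r(s))$; by (\ref{PR2}) it fixes every coordinate $(0,s)$ with $s\geq1$ pointwise. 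Thus $\tau$ is determined by the family $(\tau_r)_{r=1}^t$, the restriction of $\tau^2$ to row $r$ equals $\tau_r^2$, and $\tau^2$ fixes row $0$. Hence it suffices to prove that each $\tau_r$ is an involution of $\{0,\ldots,m\}$.

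The key step is to identify $\tau_r$ explicitly from the triples of $\mathrm{STS}(M(C,D))$ and the partition (\ref{MSTSpart}). For each $s\geq1$ I would apply $\tau$ to the triple $\{(r,0),(r,s),(0,s)\}\in T_{00}$; since $\tau$ fixes $(0,s)$, the image $\{(r,\tau_r(0)),(r,\tau_r(s)),(0,s)\}$ is again a block of $\mathrm{STS}(M(C,D))$ having two coordinates in row $r$ and the coordinate $(0,s)$ in row $0$. Inspecting (\ref{MSTSpart}) shows that the only blocks of this shape lie in $T_{00}$ or $T_{03}$, since the triples of $T_{33}$ and $T_{30}$ never place two coordinates in a common nonzero row together with a coordinate of row $0$. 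Writing $a:=\tau_r(0)$, a short case analysis on these two possibilities forces $\tau_r(a)=0$ (the only $T_{00}$-image) and $\tau_r(s)=a\cdot s$ for every $s\notin\{0,a\}$ (the $T_{03}$-images), where $\cdot$ is the Steiner quasigroup of $\mathrm{STS}(D)$; moreover column $0$ can occur only in a $T_{00}$-image, which is exactly what rules out any value $\tau_r(s)=0$ for $s\notin\{0,a\}$. Equivalently, $\tau_r$ is the left translation $s\mapsto a\star s$ in the Steiner loop $(\{0,\ldots,m\},\star)$ of $\mathrm{STS}(D)$, with the conventions $a\star0=a$ and $a\star a=0$ recovering the two exceptional values.

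It remains to derive the involution property, and here the linearity machinery enters. By Lemma \ref{RowOrbits} the column $a=\tau_r(0)$ lies in $\{0\}\cup Lin_{\mu}(D)$, so by part~1 of the Theorem, $a\in\{0\}\cup Lin_{\nu}(\mathrm{STS}(D))$, which by the remark following Lemma \ref{quasi_identities} is precisely the nucleus of the Steiner loop of $\mathrm{STS}(D)$. Associativity in the nucleus then gives, for all $s$,
$$\tau_r^2(s)=a\star(a\star s)=(a\star a)\star s=0\star s=s,$$
so $\tau_r^2=\mathrm{id}$ (when $a=0$ this is degenerate, with $\tau_r=\mathrm{id}$). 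Consequently $\tau^2=\mathrm{id}$ on every coordinate, and ${\mathcal T}$ is an elementary abelian $2$-group.

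I expect the middle step to be the main obstacle: pinning $\tau_r$ down as a single loop translation requires the careful verification that no block type of (\ref{MSTSpart}) other than $T_{00}$ and $T_{03}$ can receive the image of a $T_{00}$-triple, and that the two exceptional columns $0$ and $a$ are swapped rather than fixed. Once the translation description is secured, the nucleus identity supplies the involution property immediately.
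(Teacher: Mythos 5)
Your proof is correct, and its opening move is the same as the paper's: apply $\tau$ to the $T_{00}$-blocks $\{(r,0),(r,s),(0,s)\}$ and use the partition (\ref{MSTSpart}) to constrain the image block. After that the two arguments part ways. The paper extracts only the minimal fact that $\tau$ swaps $(r,0)$ with $(r,a)$ (where $\tau(r,0)=(r,a)$), and then concludes by rigidity of triples: $\tau^2$ fixes every $(r,0)$ and, by (\ref{PR2}), every $(0,s)$, and since the third point of the block $\{(r,0),(0,s),(r,s)\}$ is uniquely determined by the other two, $\tau^2$ also fixes every $(r,s)$, so $\tau^2=\mathrm{id}$. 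You instead determine each row permutation $\tau_r$ completely, showing it is the left translation $s\mapsto a\star s$ of the Steiner loop of $\mathrm{STS}(D)$ --- in effect you re-derive, for elements of ${\mathcal T}$, the formula (\ref{PRR0}) that the paper only establishes later inside the proof of Lemma \ref{PropertiesO} --- and you then obtain involutivity from associativity in the nucleus, which forces you to import Lemma \ref{RowOrbits}, part 1 of Theorem 1, and the nucleus remark following Lemma \ref{quasi_identities}. Your case analysis of the image blocks (only $T_{00}$ and $T_{03}$ can occur, column $0$ only via $T_{00}$) is accurate, so there is no gap; but the linearity detour is unnecessary: in any Steiner loop the identity $a\star(a\star s)=s$ holds for all $a$ and $s$ (immediate from the loop axioms when $s\in\{0,a\}$, and otherwise $(a,s,a\star s)$ is a triple, so $a\star(a\star s)=s$), hence every left translation is an involution with no hypothesis on $a$ at all. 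In summary, your route buys a full normal form for the elements of ${\mathcal T}$ (essentially the substance of Lemma \ref{PropertiesO}), at the cost of machinery this proposition does not need; the paper's rigidity trick is shorter and uses nothing beyond (\ref{MSTSpart}) and (\ref{PR2}).
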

\begin{proof}
We show that $\tau\in {\mathcal T}$ is necessarily of order not more than 2.
 Indeed, let $\tau(r,0)=(r,s)$, then, taking into account that $\tau(0,s)=(0,s)$, see (\ref{PR2}),
we have that a triple $\tau((r,0),(0,s),(r,s))=((r,s),(0,s),(r,s'))$ for some $s'$ must be in $STS(M(C,D))$. By (\ref{MSTSpart}) the triple
$((r,s),(0,s),(r,s'))$ is necessarily in $T_{00}$, so $s'=0$, i.e. $\tau(r,s)=(r,0)$. We see that $\tau^2$ fixes $(r,0)$ and $(0,s)$ for any $r\in \{1,\ldots,t\}, s\in \{1,\ldots,m\}$. Therefore, $\tau^2$ must fix $(r,s)$ for any $r\in \{1,\ldots,t\}, s\in \{1,\ldots,m\}$, because $\tau^2$ fixes elements $(r,0)$ and $(0,s)$ of the triple $((r,0), (0,s), (r,s))$. We have shown that $\tau^2$ is an identity. \end{proof}

We show that any element
of the group $G$ could be represented as a composition of the
following three symmetries: ${Dub}_2(\pi')$, for $\pi'\in Sym(D)$,
${Dub}_1(\pi)$, for $\pi\in Sym(C)$ and a symmetry $\tau\in
{\mathcal T}$. Here $\pi'\in Sym(D)$ is
the restriction of
$\sigma$ on the nonzero positions of the subcode $D^2$,
 $\pi\in Sym(C)$ is a permutation, induced by the action of $\sigma{Dub}_2(\pi'^{-1})$ on the subsets $r\times \{0,\ldots,m\}, r=1,\ldots,t$.

\begin{lemma}\label{reduce}
It is true that\\
 \noindent
1. $Stab_{(C^1)}G= {\mathcal Dub}_2(Sym(D))\vartriangleleft G;$\\
 \noindent
2.  $Stab_{(D^2)}G= \{{\mathcal Dub}_1(\pi)\tau:\pi \in Sym(C), \tau \in {\mathcal T}\}\vartriangleleft G;$\\
 \noindent
3. $G={\mathcal Dub}_2(Sym(D))\times \{{\mathcal Dub}_1(\pi)\tau:\pi \in Sym(C), \tau \in {\mathcal T}\}.$
\end{lemma}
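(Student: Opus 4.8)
The plan is to establish the three parts in order, using the structural facts already assembled. For part~1, I would take $\sigma\in Stab_{(C^1)}G$, meaning $\sigma$ fixes every coordinate $(r,0)$, $r\in\{1,\ldots,t\}$. Since $\sigma\in G$ also setwise fixes $D^2$, its restriction to the nonzero coordinates of $D^2$ is a permutation $\pi'\in Sym(D)$. I would then argue that fixing all $(r,0)$ together with $\sigma(0,s)=(0,\pi'(s))$ forces $\sigma={\mathcal Dub}_2(\pi')$: by the same triple argument used in Lemma~\ref{DubDub}, the codeword supported on $\{(r,0),(0,s),(r,s)\}$ shows that fixing $(r,0)$ and sending $(0,s)\mapsto(0,\pi'(s))$ determines the image of $(r,s)$ as $(r,\pi'(s))$, which is exactly ${\mathcal Dub}_2(\pi')$. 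The reverse inclusion is clear since any ${\mathcal Dub}_2(\pi')$ fixes $C^1$ pointwise. Normality of $Stab_{(C^1)}G$ in $G$ follows because $C^1$ is a union of $G$-orbits on coordinates (indeed $G$ setwise fixes $C^1$, as I verify via the orbit structure), so the pointwise stabilizer of $C^1$ is normal in its setwise stabilizer, hence in $G$.

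For part~2, I would take $\sigma\in Stab_{(D^2)}G$, fixing each $(0,s)$, $s\in\{1,\ldots,m\}$. Its restriction to the nonzero coordinates of $C^1$ gives $\pi\in Sym(C)$, so $\sigma(r,0)=(\pi(r),s_r)$ for some row-index permutation and some second coordinates $s_r$. The key point is to show $\sigma=\sigma_0\,{\mathcal Dub}_1(\pi)$ where $\sigma_0\in{\mathcal T}$; equivalently, that $\tau:=\sigma\,{\mathcal Dub}_1(\pi)^{-1}$ satisfies the defining conditions (\ref{PR1}) and (\ref{PR2}) of ${\mathcal T}$. Condition (\ref{PR2}) is immediate since both $\sigma$ and ${\mathcal Dub}_1(\pi)^{-1}$ fix $(0,s)$. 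For (\ref{PR1}), I must show $\tau$ preserves each ``row'' $\{r\}\times\{0,\ldots,m\}$ setwise, i.e.\ that $\tau$ does not move coordinates between distinct row-indices. Here I would use the triple structure: since $\tau$ fixes every $(0,s)$ and the first coordinate of $\tau(r,0)$ is $r$ (by construction of $\pi$), the codewords supported on $\{(r,0),(0,s),(r,s)\}$ in $T_{00}$ force $\tau(r,s)$ to again have first coordinate $r$, giving (\ref{PR1}). Thus $\tau\in{\mathcal T}$ and the factorization holds; the displayed set is then a subgroup because ${\mathcal Dub}_1(Sym(C))$ normalizes ${\mathcal T}$ (conjugation by a row-permutation permutes rows but preserves the ``column-type'' conditions defining ${\mathcal T}$). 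Normality of $Stab_{(D^2)}G$ follows as in part~1, from $D^2$ being a union of coordinate orbits.

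For part~3, I would combine the two pointwise stabilizers into an internal direct product. From Lemma~\ref{DubDub} the intersection of the two setwise stabilizers is ${\mathcal Dub}_1(Sym(C))\times{\mathcal Dub}_2(Sym(D))$, and I would verify that ${\mathcal Dub}_2(Sym(D))$ and $\{{\mathcal Dub}_1(\pi)\tau\}$ intersect trivially: an element of the first acts nontrivially only on ``$D^2$-type'' coordinates $(0,s)$ while an element of the second fixes all such $(0,s)$, so a common element fixes everything. The two factors commute because ${\mathcal Dub}_2(\pi')$ acts on the $s$-coordinate uniformly across all rows while ${\mathcal Dub}_1(\pi)$ and $\tau\in{\mathcal T}$ act within rows respecting the column structure fixed by $\tau$; I would check this commutation on generators. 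Finally, surjectivity onto $G$ is the decomposition announced before the lemma: given arbitrary $\sigma\in G$, extract $\pi'$ as its restriction to the nonzero positions of $D^2$, so that $\sigma\,{\mathcal Dub}_2(\pi'^{-1})\in Stab_{(D^2)}G$, which by part~2 equals ${\mathcal Dub}_1(\pi)\tau$; hence $\sigma={\mathcal Dub}_2(\pi')\,{\mathcal Dub}_1(\pi)\tau$. I expect the main obstacle to be part~2, specifically verifying condition (\ref{PR1})---proving that after correcting by ${\mathcal Dub}_1(\pi)$ the residual symmetry cannot transport coordinates out of their rows---which is where the explicit triple types $T_{00},T_{30}$ of the partition (\ref{MSTSpart}) and Lemma~\ref{RowOrbits} must be used carefully to rule out cross-row images.
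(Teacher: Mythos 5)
The substantive core of your plan is the same as the paper's proof: extract $\pi'$ from the restriction of $\sigma$ to $D^2$, use the image of the triple $((r,0),(r,s),(0,s))$ together with the partition (\ref{MSTSpart}) (only $T_{00}$ and $T_{03}$ contain a point with zero first coordinate and two points with nonzero first coordinate) to force row-preservation, extract $\pi\in Sym(C)$ from the induced action on rows, and factor $\sigma={\mathcal Dub}_2(\pi'){\mathcal Dub}_1(\pi)\tau$. The paper runs this once for an arbitrary $\sigma\in G$ and reads off all three parts, whereas you run it per part; the small circularity in your part 2 (you name the row permutation $\pi$ before proving that rows go to rows) is harmless and disappears on reordering, and your remark that ${\mathcal Dub}_1(Sym(C))$ normalizes ${\mathcal T}$, needed so that $\{{\mathcal Dub}_1(\pi)\tau\}$ is a subgroup, is a detail the paper omits.

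The genuine gaps are the two auxiliary claims you add to get normality in part 1 and directness in part 3, and both are false. First, $G$ does \emph{not} setwise fix $C^1$: by Lemma \ref{lowerboundO} the permutation $\tau=Ort_l(u)$ with $l\in Lin_{\mu}(D)$, $u\in C^{\perp}$ nonzero lies in ${\mathcal T}\leq G$, and for $x\in C$ with $supp(x)\cap supp(u)\neq\emptyset$ the vector $\tau(x^1)$ has ones in column $l$, hence $\tau(x^1)\notin C^1$. So $C^1$ is not a union of $G$-orbits, and the principle ``the pointwise stabilizer of an invariant set is normal'' applies to $D^2$ (your part 2 normality is fine) but cannot be applied to $C^1$. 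Second, the two factors in part 3 do not commute: since $\pi'^{-1}$ induces an automorphism of the Steiner loop of $STS(D)$, one computes ${\mathcal Dub}_2(\pi')^{-1}\,Ort_l(u)\,{\mathcal Dub}_2(\pi')=Ort_{\pi'^{-1}(l)}(u)$, so ${\mathcal Dub}_2(\pi')$ commutes with $Ort_l(u)$ only when $\pi'(l)=l$. Concretely, take $C=D$ the Hamming code of length $7$, $u$ the complement of the line $\{1,2,3\}$, $l=1$, and $\pi'\in Sym(D)$ with $\pi'(1)=2$: then $g=\tau\,{\mathcal Dub}_2(\pi')\,\tau^{-1}$ sends $(4,0)$ to $(4,3)$, so $g\in G$ is a conjugate of an element of $Stab_{(C^1)}G$ that is not in $Stab_{(C^1)}G$. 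Thus the normality in part 1 and the direct product in part 3 actually fail here; what your argument (and the paper's) really yields is $G=Stab_{(D^2)}G\rtimes{\mathcal Dub}_2(Sym(D))$, a semidirect product in which ${\mathcal Dub}_2(Sym(D))$ acts on ${\mathcal T}$ as above. You should be aware that the paper is no more rigorous at exactly these points: its one-line justification ``a pointwise stabilizer of a group acting on a set is a normal subgroup'' silently assumes the set is $G$-invariant, which holds for $D^2$ but not for $C^1$. So the obstacle you correctly anticipated in parts 1 and 3 is not a defect of your plan relative to the paper; it reflects a defect in the statement itself.
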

\begin{proof}
Let $\sigma$ be from $G$. We have that $\sigma(D^2)=D^2$, so the restriction of $\sigma$ on $D^2$ is a permutation $\pi'\in Sym(D)$ (see the proof of Lemma \ref{DubDub}).

We now show that $\sigma'=\sigma {\mathcal Dub}_2(\pi'^{-1})$ acts on the following subsets of coordinates:
 $r\times \{0,\ldots,m\}, r\in\{1,\ldots,t\}$.
  For any $s\in \{1,\ldots,m\}$ let
$\sigma'(r,0)$ be $(r',s')$ and
$\sigma'(r,s)$ be $(r'',s'')$ for some $s', s''$ and nonzero
$r',r''$. Since $((r,0), (r,s),(0,s))$ is a triple of $M(C,D)$, so must be
$(\sigma'(r,0)$,$\sigma'(r,s)$,$\sigma'(0,s))$=
($(r',s')$,$(r'',s'')$,$(0,s)$).
From (\ref{MSTSpart}) the triple ($(r',s')$,$(r'',s'')$,$(0,s)$) is in $T_{00}$ or $T_{03}$ and both cases necessarily imply that $r'=r''$.

 So, there is a permutation $\pi$ of coordinate positions of the code $C$ such that

\begin{equation}\label{action_sigma} \sigma'(r\times \{0,\ldots,m\})=\pi(r)\times \{0,\ldots,m\},\end{equation}
  for $r\in\{1,\ldots,t\}$. The permutation $\pi$
is necessarily from $Sym(C)$ since $\sigma'$
should act as an element of $\mathrm{Sym}(C)$ on the first
coordinates of the subcode $C^1$.
  For any $ x\in C$  we have that
$$p_1(\sigma'({x}^{1}))=p_1({\pi(x)}^1)=\pi(x)\in C,$$
which is true iff $\pi \in \mathrm{Sym}(C)$.

Therefore $\sigma$ is ${\mathcal Dub}_2(\pi') {\mathcal Dub}_1(\pi) \tau$ for some $\tau\in{\mathcal T}$. By definition of
${\mathcal T},$ see  (\ref{PR2}), the groups ${\mathcal T}$ and ${\mathcal Dub}_1(Sym(C))$ are subgroups of $Stab_{(D^2)}G$, \, ${\mathcal Dub}_2(Sym(D))$ is a subgroup of $Stab_{(C^1)}G$. Since a pointwise stabilizer of a group $G$ acting on a set is a normal subgroup of $G$, we obtain the required.
 \end{proof}

By Lemma \ref{reduce} we are now focused on the description of ${\mathcal T}$.
In the next lemma we use the idea similar to that of work \cite{Heden}.
\begin{lemma}\label{PropertiesO}
Let $(\{I_j(C),j=0,\ldots,2^{t-\mathrm{rk}(C)}-1\},\star')$ be a Steiner loop associated with the elements of the fundamental partition of the code $C$,
$(0\cup Lin_{\mu}(D),\star)$ be a subloop of Steiner loop associated with $STS(D)$ and
 $\tau$ be a symmetry of ${\mathcal T}$.
Then there is a group homomorphism

\noindent$\alpha:(\{I_j(C),j=0,\ldots,2^{t-\mathrm{rk}(C)}-1\},\star')\rightarrow (0\cup Lin_{\mu}(D),\star)$, such that

$$\tau(r,s)=(r,s\star\alpha(j)),$$ where $r\in I_j(C)$.

\end{lemma}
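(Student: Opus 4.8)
The plan is to determine $\tau\in{\mathcal T}$ on each ``row'' and then read off the resulting data as a loop homomorphism. By~\eqref{PR1} every $\tau\in{\mathcal T}$ fixes the row $R_r=\{(r,s):s\in\{0,\ldots,m\}\}$ setwise for each $r\in\{1,\ldots,t\}$, so $\tau$ induces a permutation $\tau_r$ of $\{0,\ldots,m\}$ with $\tau(r,s)=(r,\tau_r(s))$; set $a_r:=\tau_r(0)$. First I would pin down $\tau_r$. Applying $\tau$ to the $T_{00}$-triple $\{(r,0),(r,s),(0,s)\}$ and using $\tau(0,s)=(0,s)$ from~\eqref{PR2}, the image $\{(r,a_r),(r,\tau_r(s)),(0,s)\}$ is a triple with two points in row $r$ and one in row $0$, hence lies in $T_{00}\cup T_{03}$ by~\eqref{MSTSpart}. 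In the $T_{03}$ case $(a_r,\tau_r(s),s)$ is a block of $STS(D)$, so $\tau_r(s)=s\star a_r$; in the $T_{00}$ case $\{a_r,\tau_r(s)\}=\{0,s\}$, and both possibilities again give $\tau_r(s)=s\star a_r$ (since $s\star 0=s$ and $s\star s=0$). Thus $\tau$ acts on each row as the $\star$-translation by $a_r$, and by Lemma~\ref{RowOrbits} we have $a_r\in 0\cup Lin_{\mu}(D)$; putting $a_0:=0$ is consistent with~\eqref{PR2}.

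Next I would check that $r\mapsto a_r$ respects the triples of $C$, i.e. $a_{r_1}\star a_{r_2}\star a_{r_3}=0$ for every $(r_1,r_2,r_3)\in STS(C)$. Here I use that the subcode of $D$ on $Lin_{\mu}(D)$ is a Hamming code (third part of the theorem of Section~4), so $(0\cup Lin_{\mu}(D),\star)$ is an elementary abelian $2$-group, and its elements, being $\nu$-linear (first part of that theorem) hence in the nucleus of the Steiner loop of $D$ (remark after Lemma~\ref{quasi_identities}), associate and commute with all elements. Apply $\tau$ to the $T_{30}$-triple $\{(r_3,0),(r_1,s),(r_2,s)\}$ for some $s\in\{1,\ldots,m\}$ with $s\neq a_{r_1},a_{r_2}$ (available because $m\geq 7$). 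The image $\{(r_3,a_{r_3}),(r_1,s\star a_{r_1}),(r_2,s\star a_{r_2})\}$ lies in $T_{30}\cup T_{33}$. If it is in $T_{33}$, the block identity $a_{r_3}\star(s\star a_{r_1})=s\star a_{r_2}$ together with associativity in the nucleus and cancellation of $s$ yields $a_{r_1}\star a_{r_2}\star a_{r_3}=0$; if it is in $T_{30}$, the choice of $s$ forces $a_{r_3}=0$ and $a_{r_1}=a_{r_2}$, giving the same identity. Hence $r\mapsto a_r$ (with $a_0=0$) is a homomorphism from the Steiner loop of $STS(C)$ into $(0\cup Lin_{\mu}(D),\star)$.

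Finally I would transfer this to the fundamental partition. If $a_r$ depends only on the class $I_j(C)$ of $r$, then $\alpha(j):=a_r$ is well defined, and Lemma~\ref{Heden_lemma}(2) (for $r\in I_j,r'\in I_{j'}$ one has $r\star r'\in I_{j\star'j'}$) turns the homomorphism property of $r\mapsto a_r$ into $\alpha(j\star'j')=\alpha(j)\star\alpha(j')$, which is exactly the claim. Since $(\{I_j(C)\},\star')$ is an elementary abelian $2$-group (Lemma~\ref{Heden_lemma}(3)) and $r\mapsto a_r$ is a loop homomorphism, constancy on classes is equivalent, through Lemma~\ref{Heden_lemma}(1), to the single assertion that $a_r=0$ for every $r\in I_0(C)$. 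This is the step I expect to be the main obstacle: the relations extracted above involve only blocks of $STS(C)$, hence only the span of the weight-$3$ codewords of $C$, whereas $I_0(C)$ is defined through the full dual $C^{\perp}$. To overcome it I would argue, in the spirit of Heden's proof (\cite{Heden}, cf. Lemma~\ref{Heden_lemma}), that for a perfect code the weight-$3$ codewords span $\langle C\rangle$ (equivalently $STS(C)^{\perp}=C^{\perp}$), so that the fundamental-partition quotient is exactly the maximal elementary abelian $2$-quotient of the Steiner loop of $C$, through which every homomorphism into a $2$-group, in particular $r\mapsto a_r$, necessarily factors; this forces $a_r=0$ on $I_0(C)$ and completes the argument. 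Should this span property require care, one would instead exploit the finer Mollard structure, e.g. that $(r,0)\in I_0(M(C,D))$ for $r\in I_0(C)$, to rule out nonzero $a_r$ on $I_0(C)$ directly.
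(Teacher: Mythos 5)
Your first three steps are sound and essentially coincide with the paper's argument: the row action $\tau(r,s)=(r,s\star a_r)$ with $a_r\in 0\cup Lin_{\mu}(D)$ (via $T_{00}$-triples, (\ref{PR2}) and Lemma \ref{RowOrbits}), the relation $a_{r}\star a_{r'}=a_{r\star r'}$ over triples of $STS(C)$, and the reduction of constancy on the classes $I_j(C)$ to the single claim that $a_r=0$ for all $r\in I_0(C)$. The genuine gap is exactly at the step you yourself flagged as the main obstacle, and neither of your two proposed resolutions closes it. The primary one rests on the claim that the weight-3 codewords of a perfect code span its linear span, i.e. $STS(C)^{\perp}=C^{\perp}$. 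This is false in general: take $C$ to be a Vasil'ev code of length $15$, $C=\{(x,x+c,|x|+f(c)): x\in {\bf F}_2^7,\ c\in H\}$ with $H$ the Hamming code of length $7$, where $f$ agrees with a linear functional on the seven weight-3 codewords of $H$ but is nonlinear on $H$ (e.g.\ flip its value on one codeword of weight $7$). Then $\mathrm{rk}(STS(C))=11<12=\mathrm{rk}(C)$, so the maximal elementary abelian 2-quotient of the Steiner loop of $C$ (which is governed by $STS(C)^{\perp}$) is strictly larger than the fundamental-partition quotient (governed by $C^{\perp}$), and a loop homomorphism $r\mapsto a_r$ built from $STS$-relations alone need not factor through the fundamental partition. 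Your relations only use that $\tau$ preserves $STS(M(C,D))$, and that information by itself cannot force $a_r=0$ on $I_0(C)$.

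The fallback fails as well. Applying Lemma \ref{fundPart} to the full code $M(C,D)$ gives, for $r\in I_0(C)$, only that $a_r\in I_0(D)\cup 0$; combined with Lemma \ref{RowOrbits} this yields $a_r\in (I_0(D)\cap Lin_{\mu}(D))\cup\{0\}$, and this intersection can be nonempty: for the same Vasil'ev code above, now in the role of $D$, the set $I_0(D)$ consists of the single parity coordinate, and that coordinate is $\mu$-linear because all seven code triples through it lie in $\mathrm{Ker}(D)$. The paper's proof supplies precisely the idea you are missing: restrict $\tau$ to the perfect subcode $M(C,D_{\mu})$, where $D_{\mu}$ is the subcode of $D$ on the positions $Lin_{\mu}(D)$ --- a Hamming code by part 3 of Theorem 1 --- the restriction being legitimate by Lemma \ref{RowOrbits}. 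Since a Hamming code has empty $I_0$, one gets $I_0(M(C,D_{\mu}))=I_0(C)\times 0$ exactly, and then Lemma \ref{fundPart}, together with the row-preservation property (\ref{PR1}), forces $\tau(r,0)=(r,0)$ for every $r\in I_0(C)$. In other words, the step must use that $\tau$ is a symmetry of a perfect code (namely of $M(C,D_{\mu})$), not merely an automorphism of the Steiner triple system; once this is in place, your own steps 2 and 3 finish the proof just as the paper does.
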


\begin{proof}

 For any $r\in \{1,\ldots,t\}$ define $l_r$ from the condition $\tau(r,0)=(r,l_r)$. By Lemma \ref{RowOrbits}, the coordinate $l_r$ must be in $0\cup Lin_{\mu}(D)$.
Consider a triple $((r,s), (0,s), (r,0))$. Since $\tau((r,s), (0,s), (r,0))$ is a triple of $M(C,D)$ and $\tau(r,0)=(r,l_r)$, $\tau(r,s)=(r,s')$, $\tau(0,s)=(0,s)$, we see that $p_2(e_{r,l_r}+e_{r,s'}+e_{0,s})=e_{l_r}+e_{s'}+e_s$
 must be in $D$, so either one of the elements
 $l_r, s'$ is zero and the remaining is equal to $s$ or $(l_r,s,s')\in STS(D)$.
 This could be rewritten in
 the form
  $s'=s\star l_r$ and we have

\begin{equation}\label{PRR0}
\tau(r,s)=(r,s\star l_r).
\end{equation}

If $r$ is zero we set $l_0$ equal to 0 according to (\ref{PR2}).

We prove that $l_r=l_{r'}$, for $r, r'\in I_j(C)$. 
 Consider the restriction $\tau'$ of $\tau$ on the perfect subcode $M(C,D_{\mu})$ of the code $M(C,D)$, where $D_{\mu}$ is a linear subcode of $D$ on the positions $Lin_{\mu}(D)$. The restriction is correct, i.~e. $\tau'$ is a symmetry of $M(C,D_{\mu})$, since by Lemma \ref{RowOrbits} a symmetry $\tau$ fixes the set of the coordinates of $M(C,D_{\mu})$.
We have the following representation for the fundamental partition associated with $M(C,D_{\mu})$ (see Section \ref{Fund_sec}):

$$I_0(M(C,D_{\mu}))=I_0(C)\times 0,$$
$$I_j(C)\times s,\mbox{for all } s\in \{0,\ldots,m\},$$
$$(I_0(C)\cup 0)\times s\in \{1,\ldots,m\}.$$
 By (\ref{PR1}) and Lemma \ref{fundPart} we see that $\tau'$ fixes any element $(r,0)$ of $I_0(M(C,D_{\mu}))$, so
we have that $l_r$ is equal to $0$ for all $r\in I_0(C)$.

For any distinct $r$, $r'$, we have  $r\cdot r'=r\star r'$ and
\begin{equation}\label{Eq_lemma}\tau((r,0),(r',0),(r\star r',0))=((r,l_r),(r',l_{r'}),(r\star r',l_{r\star r'})).\end{equation}

If $r$, $r'\in I_j(C)$, $j\in\{0,\ldots,2^{t-\mathrm{rk}(C)}-1\}$ then
$r\star r'=r\cdot r'$ is in $I_0(C)$ (see Lemma \ref{Heden_lemma}) so $l_{r\star r'}=0$ and (\ref{Eq_lemma}) implies that $l_r=l_{r'}$. Therefore the action of $\tau$  can be presented as $(r,s)\rightarrow(r,s\star\alpha(j))$ if $r\in I_j(C)$ for some mapping $\alpha$ of  $\{I_j(C),j=0,\ldots,2^{t-\mathrm{rk}(C)}-1\}$ into $0\cup Lin_{\mu}(D)$.

Moreover, we have that $\alpha$ is an operation-preserving mapping.
By Lemma \ref{Heden_lemma} for any $j$, $j'$ there is a unique $j\star' j'$ such that
for $r\in I_j(C)$, $r' \in I_{j'}(C)$, $r\star r'$ is in $I_{j\star j'}(C)$. Because any triple $(\tau(r,0)$, $\tau(r',0)$, $\tau(r\star r',0))$
$=((r,l_r),(r',l_{r'}),(r\star r', l_{r \star r'})$
 must be a triple of $STS(M(C,D))$ we necessarily have that $\alpha(j)\star \alpha(j')=l_r\star l_{r'}=l_{r \star r'}=\alpha(j\star' j')$.
\end{proof}


Now from Lemma \ref{PropertiesO} we immediately obtain an evaluation for the order of ${\mathcal T}$.

\begin{corollary}\label{upperboundO} The order of ${\mathcal T}$ is not more than $(1+|Lin_{\mu}(D)|)^{t-\mathrm{rk}(C)}$.
\end{corollary}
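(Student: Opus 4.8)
The plan is to read the bound directly off the homomorphism $\alpha$ produced in Lemma \ref{PropertiesO}, which already does the substantial work. The crucial observation is that Lemma \ref{PropertiesO} shows every $\tau \in {\mathcal T}$ to be \emph{completely determined} by a single group homomorphism $\alpha_\tau : (\{I_j(C),j=0,\ldots,2^{t-\mathrm{rk}(C)}-1\},\star') \to (0\cup Lin_\mu(D),\star)$ through the rule $\tau(r,s)=(r,s\star\alpha_\tau(j))$ for $r\in I_j(C)$, together with the fixing $\tau(0,s)=(0,s)$ supplied by (\ref{PR2}). First I would check that the assignment $\tau\mapsto\alpha_\tau$ is injective: if $\tau,\tau'\in{\mathcal T}$ induce the same $\alpha_\tau=\alpha_{\tau'}$, then they agree on every coordinate $(r,s)$ with $r\neq 0$ and, by (\ref{PR2}), also on every $(0,s)$, so $\tau=\tau'$. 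Writing $G_1=(\{I_j(C)\},\star')$ and $G_2=(0\cup Lin_\mu(D),\star)$, this injection immediately yields $|{\mathcal T}|\le |\mathrm{Hom}(G_1,G_2)|$.

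Next I would identify the two groups precisely. By part 3 of Lemma \ref{Heden_lemma}, $G_1$ is an elementary abelian $2$-group, and since the fundamental partition of $C$ has exactly $2^{t-\mathrm{rk}(C)}$ classes, $G_1\cong(\mathbb{Z}/2)^{t-\mathrm{rk}(C)}$, an $\mathbb{F}_2$-vector space of dimension $t-\mathrm{rk}(C)$. For $G_2$, the third part of the Theorem on linear coordinates shows the subcode of $D$ on the positions $Lin_\mu(D)$ is a Hamming code, so the subsystem of $STS(D)$ on $Lin_\mu(D)$ is projective and its associated Steiner loop $G_2$ is again an elementary abelian $2$-group; equivalently, by the remark following Lemma \ref{quasi_identities}, $0\cup Lin_\mu(D)$ is the nucleus of the Steiner loop of $STS(D)$, a group of exponent $2$. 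In particular $|G_2|=1+|Lin_\mu(D)|$.

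Finally I would count $\mathrm{Hom}(G_1,G_2)$. Because both groups are elementary abelian $2$-groups, hence $\mathbb{F}_2$-vector spaces, a group homomorphism between them is exactly an $\mathbb{F}_2$-linear map; such a map is freely determined by the images of a basis of $G_1$, and each of the $t-\mathrm{rk}(C)$ basis vectors may be sent to any of the $|G_2|=1+|Lin_\mu(D)|$ elements of $G_2$ (no torsion constraint arises precisely because $G_2$ has exponent $2$). Hence $|\mathrm{Hom}(G_1,G_2)|=(1+|Lin_\mu(D)|)^{t-\mathrm{rk}(C)}$, and combining this with the injectivity of $\tau\mapsto\alpha_\tau$ gives $|{\mathcal T}|\le(1+|Lin_\mu(D)|)^{t-\mathrm{rk}(C)}$.

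I do not expect a serious obstacle: Lemma \ref{PropertiesO} supplies the structural heart of the argument, and what remains is bookkeeping. The only two points needing a line of care are the injectivity of $\tau\mapsto\alpha_\tau$ (where (\ref{PR2}) is needed to pin down the action on the $(0,s)$ coordinates, since $\alpha_\tau$ records only the behaviour on rows $r\neq 0$) and the verification that $G_2$ genuinely has exponent $2$, which is what makes the homomorphism count equal to $|G_2|$ raised to $\dim_{\mathbb{F}_2}G_1$ rather than something smaller.
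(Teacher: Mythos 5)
Your proof is correct and follows essentially the same route as the paper, which derives the bound immediately from Lemma \ref{PropertiesO}: each $\tau\in{\mathcal T}$ is determined by its homomorphism $\alpha_\tau$, which in turn is determined by the images of the $t-\mathrm{rk}(C)$ generators of the elementary abelian $2$-group of fundamental partition classes, each image lying in a set of size $1+|Lin_\mu(D)|$. Your write-up merely makes explicit the injectivity of $\tau\mapsto\alpha_\tau$ and the homomorphism count that the paper leaves as ``immediate.''
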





For a codeword $u\in C$ and an element $l \in Lin_{\mu}(D)$, denote by $Ort_{l}(u)$ the permutation on the coordinates of $M(C,D)$ defined in the following way
$$Ort_{l}(u)(r,s)=(r,s \star l), \mbox{ for} \,\,\,  r \in supp(u), s\in \{0,\ldots,m\},$$
$$Ort_{l}(u)(r,s)=(r,s), \mbox{ otherwise},$$
where $\star$ is a binary operation in the Steiner loop associated with $STS(D)$.

We agree that $Ort_{A}(U)$ denotes the collection of permutations $\{Ort_{l}(u):l\in A, u \in U\}$.

\begin{lemma}\label{lowerboundO}
Let $C$ and $D$ be perfect codes. Then
$<Ort_{Lin_{\mu}(D)}(C^{\perp})>\leq{\mathcal T}$ and $<Ort_{Lin_{\mu}(D)}(C^{\perp})>\cong Z_2^{(log_2(1+|Lin_{\mu}(C)|))^{t-\mathrm{rk}(C)}}$, here $t$ is length of the code $C$.

\end{lemma}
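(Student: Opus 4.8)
The plan is to prove the two assertions of the lemma in turn: first that every generator $Ort_{l}(u)$ with $u\in C^{\perp}$ and $l\in Lin_{\mu}(D)$ lies in ${\mathcal T}$, and then that the subgroup they generate is elementary abelian of the stated rank. For the containment, conditions (\ref{PR1}) and (\ref{PR2}) are immediate from the definition of $Ort_{l}(u)$: this permutation changes only the second coordinate, and it fixes every $(0,s)$ because $0\notin supp(u)\subseteq\{1,\ldots,t\}$. Writing $l_{r}=l$ for $r\in supp(u)$ and $l_{r}=0$ otherwise, $Ort_{l}(u)$ has exactly the shape $\tau(r,s)=(r,s\star l_{r})$ appearing in (\ref{PRR0}), so the only substantive point is that $Ort_{l}(u)$ preserves $STS(M(C,D))$, i.e. belongs to $G$.

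For that I would check the images of the four triple classes in (\ref{MSTSpart}). Two facts make this work: (i) $u\in C^{\perp}$ meets every triple of $STS(C)$ in an even number of points, so that on any triple whose first coordinates form an $STS(C)$-triple (classes $T_{30}$, $T_{33}$) an even number of rows get shifted; and (ii) since $l\in Lin_{\mu}(D)\subseteq Lin_{\nu}(STS(D))$, the element $l$ lies in the nucleus of the Steiner loop of $D$, so by Lemma~\ref{quasi_identities} it satisfies $(s\star l)\star(s'\star l)=s\star s'$ together with the associativity identity with $l$. A short case analysis on whether each row lies in $supp(u)$ then shows that $Ort_{l}(u)$ carries $T_{00}\cup T_{03}$ into itself and $T_{30}\cup T_{33}$ into itself, the degenerate cases $s\star l=0$ being absorbed into the lower classes; since $Ort_{l}(u)$ is a bijection this yields $Ort_{l}(u)\in{\mathcal T}$, hence $<Ort_{Lin_{\mu}(D)}(C^{\perp})>\leq{\mathcal T}$. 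I expect this case analysis to be the main obstacle, as the bookkeeping of $supp(u)$ interacts with the loop identities; everything else is formal.

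For the structure of the generated group I would first note that each $Ort_{l}(u)$ is an involution, since $l\star l=0$ gives $Ort_{l}(u)^{2}=\mathrm{id}$, and that all generators commute because they already lie in the elementary abelian $2$-group ${\mathcal T}$ (Proposition~\ref{OGroup}); hence the generated group is an elementary abelian $2$-group, and it suffices to compute its rank. By Lemma~\ref{PropertiesO} the map $\tau\mapsto\alpha$ sending a symmetry of ${\mathcal T}$ to its homomorphism is an injective group homomorphism, and since $u\in C^{\perp}$ is constant on each part $I_{j}(C)$ of the fundamental partition, the homomorphism attached to $Ort_{l}(u)$ is $\alpha_{u,l}(I_{j})=(u|_{I_{j}})\cdot l$.

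It then remains to exhibit $(t-\mathrm{rk}(C))\cdot\log_{2}(1+|Lin_{\mu}(D)|)$ independent generators. Because the subcode of $D$ on $Lin_{\mu}(D)$ is a Hamming code, $(0\cup Lin_{\mu}(D),\star)$ is elementary abelian of rank $k=\log_{2}(1+|Lin_{\mu}(D)|)$; I would fix a basis $l_{1},\ldots,l_{k}$ of it inside $Lin_{\mu}(D)$ and a basis $u_{1},\ldots,u_{d}$ of $C^{\perp}$, where $d=\dim C^{\perp}=t-\mathrm{rk}(C)$, and show that the $dk$ homomorphisms $\alpha_{u_{a},l_{b}}$ are linearly independent. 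Indeed, if $\sum_{a,b}\epsilon_{ab}\alpha_{u_{a},l_{b}}=0$, then evaluating at each part $I_{j}$ and using independence of the $l_{b}$ forces $\sum_{a}\epsilon_{ab}(u_{a}|_{I_{j}})=0$ for all $b$ and $j$; as the $u_{a}$ form a basis of $C^{\perp}$ and are determined by their values on the parts, this forces every $\epsilon_{ab}=0$. Hence the generated group has rank exactly $dk$, so $<Ort_{Lin_{\mu}(D)}(C^{\perp})>\cong Z_{2}^{(t-\mathrm{rk}(C))\log_{2}(1+|Lin_{\mu}(D)|)}$, which is the asserted group; comparing with Corollary~\ref{upperboundO} this order also equals $|{\mathcal T}|$, so in fact $<Ort_{Lin_{\mu}(D)}(C^{\perp})>={\mathcal T}$.
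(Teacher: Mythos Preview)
Your route to membership is genuinely different from the paper's, and the difference matters. The paper does \emph{not} verify that $Ort_{l}(u)$ preserves the four triple classes of $STS(M(C,D))$; instead it checks directly that $Ort_{l}(u)\in Sym(M(C,D))$ by taking an arbitrary $z\in M(C,D)$, writing $z=x^{1}+y^{2}+\sum(e_{r,s}+e_{0,s}+e_{r,0})$ via Lemma~\ref{RepMol}, and computing $p_{1}$ and $p_{2}$ of the image. The crucial step is that $p_{2}(Ort_{l}(u)(z))=y+\sum_{z_{r,s}=1,\,r\in supp(u)}(e_{s\star l}+e_{s}+e_{l})$, and each summand $e_{s\star l}+e_{s}+e_{l}$ lies in $\mathrm{Ker}(D)$ precisely because $l\in Lin_{\mu}(D)$; this is where the full strength of $\mu$-linearity (not just $\nu$-linearity) is used.

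Your argument, by contrast, uses only that $l$ is in the nucleus of the Steiner loop of $D$ (Lemma~\ref{quasi_identities}), and therefore only establishes $Ort_{l}(u)\in Aut(STS(M(C,D)))$. Since ${\mathcal T}$ is defined as a subgroup of $G\leq Sym(M(C,D))$ (as the paper's own proof of this lemma and the statement of Theorem~2 make clear, despite the section heading), preserving the weight-$3$ codewords is not enough: in general $Aut(STS(C'))$ can be strictly larger than $Sym(C')$ for a perfect code $C'$. So your case analysis on $T_{00},T_{03},T_{30},T_{33}$ proves exactly the Steiner-system analogue (Theorem~3), but leaves a gap for the code statement. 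To close it you would either have to upgrade the triple analysis to all of $M(C,D)$, or switch to the $p_{1},p_{2}$ computation as the paper does; the latter is where the kernel property of the $\mu$-linear triples becomes indispensable.

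Your treatment of the group structure is essentially the paper's argument made more explicit: both pick bases $l_{1},\ldots,l_{k}$ of $0\cup Lin_{\mu}(D)$ and $u_{1},\ldots,u_{d}$ of $C^{\perp}$ and observe that the $dk$ permutations $Ort_{l_{b}}(u_{a})$ generate; your independence check via the injective homomorphism $\tau\mapsto\alpha$ of Lemma~\ref{PropertiesO} is a clean way to justify what the paper asserts in one sentence.
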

\begin{proof}
Let $u$ be an arbitrary nonzero vector from $C^\perp$, $l\in Lin_{\mu}(D)$, $z$ be an arbitrary
codeword of $M(C,D)$. We show that $Ort_{l}(u)$ is in $Sym(M(C,D))$.

By definition of $Ort_{l}(u)$, $p_1(Ort_{l}(u)(z))=p_1(z)$. Using Lemma \ref{RepMol}, $$z={x}^{1}+{y}^2+\sum_{(r,s): z_{r,s}=1}(e_{r,s}+e_{0,s}+e_{r,0})$$ for some $x\in C$ and $y\in D$.
Therefore we have the following equality:
 \begin{equation}\label{P2}p_2(Ort_{l}(u)(z))=p_2(Ort_{l}(u)({x}^{1}))+p_2(Ort_{l}(u)({y}^{2}))+p_2(\sum_{(r,s): z_{r,s}=1}(e_{r,s}+e_{0,s}+e_{r,0})).\end{equation}
We show that the righthanded side of (\ref{P2}) is a codeword of $D$.
 By definition of $Ort_{l}(u)$, we have that $Ort_{l}(u)({y}^{2})={y}^{2}$ and therefore $p_2(Ort_{l}(u)({y}^{2}))=y$. Since $u \in C^{\perp}$, there is the vector with the support $supp(u)\times \{0,\ldots,m\}$
  in $(M(C,D))^{\perp}$, see (\ref{Mollard_Dual}). Then the size of
$supp({x}^{1})\cap (supp(u)\times \{0,\ldots,m\})$ must be even. Since $supp({x}^{1})=supp(x)\times {\bf 0}^{tm}$, we have
that $$supp({x}^{1})\cap (supp(u)\times \{0,\ldots,m\})=(supp(x)\cap supp(u))\times {\bf 0}^{tm}.$$
 Since $Ort_{l}(u)({x}^{1})$ is obtained from ${x}^{1}$ by interchanging the subset of zero coordinates $supp(u)\times i$ and
 the coordinates from the subset $supp(u)\times {\bf 0}^{tm}$, we see that $p_2(Ort_{l}(u)({x}^{1}))$ is zero, since the block $supp(u)\times i$ contains even number of ones
 in $Ort_{l}(u)({x}^{1})$. So, $p_2(Ort_{l}(u)({x}^{1}))$ is zero.

  Now, by definition of $Ort_{l}(u)$  the triple
  $e_{r,s}+e_{0,s}+e_{r,0}$ is fixed by $Ort_{l}(u)$ for any $r \notin supp(u)$ and any $s\in\{1,\ldots,m\}$ and therefore $p_2(\sigma(e_{r,s}+e_{0,s}+e_{r,0}))={\bf 0}^m$. If $r$ is in $supp(u)$, then
    $\sigma( e_{r,s}+e_{0,s}+e_{r,0})=e_{r,s\star l}+ e_{0,s}+e_{r,l}$ and so we have that $$p_2(\sigma( e_{r,s}+e_{0,s}+e_{r,0}))=e_{s\star l}+e_s+e_l \in D$$ by definition of the operation $\star$.
Combining the obtained values for the righthand side of the equality (\ref{P2}) we have

$$p_2(Ort_{l}(u)(z))=y+\sum_{z_{r,s}=1, r\in supp(u)}(e_{s\star l}+e_s+e_l).$$

Any triple in the last sum is from $Ker(D)$, since it contains $l\in Lin_{\mu(D)}$, so we obtain that $p_2(Ort_{l}(u)(z))$ is in $D$.
Therefore $Ort_{l}(u)$ is a symmetry of  $M(C,D)$.

By Proposition \ref{OGroup}, we see that $<Ort_{Lin_{\nu}(D)}(C^{\perp})>$ is an elementary abelian 2-group.
A minimum set of generators for this group could be chosen to consist of symmetries $Ort_{l}(c)$, where $l$ runs through a minimal
generator set for the elementary abelian 2-group associated to the projective Steiner triple subsystem of $STS(D)$, defined on the points $Lin_{\mu}(D)$, and $c$ runs through a
set of generators of the code $C^{\perp}$. Therefore we have that $$<Ort_{Lin_{\mu}(D)}(C^{\perp})>\cong Z_2^{(log_2(1+|Lin_{\mu}(C)|))^{t-\mathrm{rk}(C)}}.$$
\end{proof}

By Corollary \ref{upperboundO} and Lemma \ref{lowerboundO} we have the description for $G$:

\begin{theorem}


Let $C$ and $D$ be two reduced perfect codes. Then

$$G=({\mathcal Dub}_1(Sym(C))\rightthreetimes <Ort_{Lin_{\mu}(D)}(C^{\perp})>)\times{\mathcal Dub}_2(Sym(D)).$$

\end{theorem}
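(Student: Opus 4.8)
The plan is to assemble the statement from the two-factor decomposition already supplied by Lemma \ref{reduce}, refining its second factor into a semidirect product. By part 3 of Lemma \ref{reduce} we have $G={\mathcal Dub}_2(Sym(D))\times Stab_{(D^2)}G$, so it suffices to identify $Stab_{(D^2)}G$ with ${\mathcal Dub}_1(Sym(C))\rightthreetimes <Ort_{Lin_\mu(D)}(C^\perp)>$. Part 2 of Lemma \ref{reduce} already gives $Stab_{(D^2)}G=\{{\mathcal Dub}_1(\pi)\tau:\pi\in Sym(C),\tau\in{\mathcal T}\}$, so the whole problem reduces to two things: identifying ${\mathcal T}$ with $<Ort_{Lin_\mu(D)}(C^\perp)>$, and then checking that ${\mathcal Dub}_1(Sym(C))$ together with that group really forms a semidirect product.

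First I would pin down ${\mathcal T}$ by squeezing it between the two matching bounds. Lemma \ref{lowerboundO} gives the inclusion $<Ort_{Lin_\mu(D)}(C^\perp)>\leq{\mathcal T}$ together with the order of the left-hand group, namely $2^{\log_2(1+|Lin_\mu(D)|)\cdot(t-\mathrm{rk}(C))}$. Since the subsystem of $STS(D)$ supported on $Lin_\mu(D)$ is a Hamming code (established earlier in Section 4), the quantity $1+|Lin_\mu(D)|$ is a power of two, so this order equals $(1+|Lin_\mu(D)|)^{t-\mathrm{rk}(C)}$, which is exactly the upper bound for $|{\mathcal T}|$ furnished by Corollary \ref{upperboundO}. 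A subgroup whose order attains the upper bound of the ambient group must coincide with it, hence $<Ort_{Lin_\mu(D)}(C^\perp)>={\mathcal T}$.

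Next I would verify the semidirect product structure of $Stab_{(D^2)}G={\mathcal Dub}_1(Sym(C))\cdot{\mathcal T}$. Trivial intersection is immediate: an element of ${\mathcal Dub}_1(Sym(C))$ acts as $(r,s)\mapsto(\pi(r),s)$, whereas every $\tau\in{\mathcal T}$ fixes all first coordinates by (\ref{PR1}); equality of the two forces $\pi=\mathrm{id}$. For normality of ${\mathcal T}$ I would conjugate a generator $Ort_l(u)$, with $u\in C^\perp$, by ${\mathcal Dub}_1(\pi)$ and obtain the identity ${\mathcal Dub}_1(\pi)\,Ort_l(u)\,{\mathcal Dub}_1(\pi)^{-1}=Ort_l(\pi(u))$, read off directly from the definitions of the two permutations. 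Because $\pi\in Sym(C)$ stabilizes $C^\perp$ by (\ref{SymRot}), the vector $\pi(u)$ again lies in $C^\perp$, so the conjugate stays inside $<Ort_{Lin_\mu(D)}(C^\perp)>={\mathcal T}$; combined with the fact that ${\mathcal T}$ is abelian (Proposition \ref{OGroup}), this shows ${\mathcal T}\vartriangleleft Stab_{(D^2)}G$. Trivial intersection, normality of ${\mathcal T}$, and the product exhausting $Stab_{(D^2)}G$ (part 2 of Lemma \ref{reduce}) together give the semidirect product; the conjugation is genuinely nontrivial as soon as $Sym(C)$ moves some dual codeword, which is why $\rightthreetimes$ and not $\times$ appears here. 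Substituting back into $G={\mathcal Dub}_2(Sym(D))\times Stab_{(D^2)}G$ yields the claimed decomposition.

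I expect the single real obstacle to be the order-matching step: one must be certain that the lower bound from Lemma \ref{lowerboundO} and the upper bound from Corollary \ref{upperboundO} are expressed in the same units, i.e. that $\log_2(1+|Lin_\mu(D)|)$ is an integer (equivalently that $Lin_\mu(D)$ supports a Hamming code), so that $2^{\log_2(1+|Lin_\mu(D)|)(t-\mathrm{rk}(C))}=(1+|Lin_\mu(D)|)^{t-\mathrm{rk}(C)}$ holds exactly rather than merely asymptotically. The reduced hypothesis on $C$ and $D$ is what underpins the ambient identifications ($D^2$ and $C^1$ as genuine perfect subcodes, and the associated stabilizer and fundamental-partition descriptions) on which all these bounds rest; once the two orders are seen to agree on the nose, the remaining group-theoretic bookkeeping is routine.
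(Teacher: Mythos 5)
Your proposal is correct and follows exactly the route the paper intends: it combines Lemma \ref{reduce} with the squeeze between Corollary \ref{upperboundO} and Lemma \ref{lowerboundO} to identify ${\mathcal T}=<Ort_{Lin_{\mu}(D)}(C^{\perp})>$, which is precisely what the paper's one-line justification of the theorem appeals to. You merely make explicit what the paper leaves implicit (the order-matching argument, including the correct reading of the exponent in Lemma \ref{lowerboundO}, the trivial intersection, and the conjugation formula ${\mathcal Dub}_1(\pi)\,Ort_l(u)\,{\mathcal Dub}_1(\pi)^{-1}=Ort_l(\pi(u))$ giving normality), so this is the same proof with the details filled in.
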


With a slightly shorter proof than that for the previous theorem we obtain the analogous result for Steiner triple systems:

\begin{theorem}
Let $S_1$ and $S_2$ be arbitrary two Steiner triple systems,
$M(S_1,S_2)$ be a Steiner triple system obtained from $S_1$ and
$S_2$ by applying
the Mollard construction. Then\\
$Stab_{S_2^2}Aut(M(S_1,S_2))=({Dub}_1(Aut(S_1))\rightthreetimes
<Ort_{Lin_{\nu}(S_2)}(S_1^{\perp})>)\times{Dub}_2(Aut(S_2)).$

\end{theorem}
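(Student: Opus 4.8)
The plan is to transcribe the proof of the preceding theorem into the language of Steiner triple systems, replacing $Sym$ by $Aut$, the code $M(C,D)$ by the triple system $M(S_1,S_2)$, the invariant $\mu$ by $\nu$, and $C^{\perp}$ by $S_1^{\perp}$. The decisive simplification, and the reason the authors call the proof ``slightly shorter'', is that the four families in the partition (\ref{MSTSpart}) of $STS(M(C,D))$ are literally the blocks of $M(S_1,S_2)$. Hence every lemma of Section 5 whose proof used only that partition transfers verbatim, while the more laborious code computations (verifying membership in $M(C,D)$ through $p_1$, $p_2$ and the dual) collapse to the single requirement that a permutation send blocks to blocks.

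Write $G'=Stab_{S_2^2}Aut(M(S_1,S_2))$ and let $\mathcal{T}$ denote the subgroup of $G'$ satisfying (\ref{PR1}) and (\ref{PR2}). First I would prove the analogues of Lemmas \ref{DubDub} and \ref{reduce}. For $\sigma\in G'$, restriction to $S_2^2$ yields $\pi'\in Aut(S_2)$; then $\sigma\, Dub_2(\pi'^{-1})$ fixes each $(0,s)$, $s\geq 1$, and permutes the rows $r\times\{0,\ldots,m\}$, because a block $((r,0),(r,s),(0,s))\in T_{00}$ must map into $T_{00}\cup T_{03}$, forcing the first coordinate to be constant along a row. This induces $\pi\in Aut(S_1)$ on $\{1,\ldots,t\}$, and the residual permutation $\tau$ satisfies (\ref{PR1})--(\ref{PR2}), i.e. $\tau\in\mathcal{T}$. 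Thus $G'=Dub_2(Aut(S_2))\times\{Dub_1(\pi)\tau:\pi\in Aut(S_1),\tau\in\mathcal{T}\}$ exactly as in Lemma \ref{reduce}, and $\mathcal{T}$ is elementary abelian of exponent $2$ by the argument of Proposition \ref{OGroup}, which uses only the triples of $T_{00}$.

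Next I would describe $\mathcal{T}$. Setting $\tau(r,0)=(r,l_r)$, the $\nu$-analogue of Corollary \ref{coroMu} (that $\nu_{(r,s)}=\nu_{(r,0)}$ iff $s\in 0\cup Lin_\nu(S_2)$) together with the $\nu$-analogue of Lemma \ref{RowOrbits} forces $l_r\in 0\cup Lin_\nu(S_2)$, and applying $\tau$ to the block $((r,s),(0,s),(r,0))$ forces $\tau(r,s)=(r,s\star l_r)$ for the Steiner-loop operation $\star$ of $S_2$. Feeding this into the fundamental partition of $S_1$ and the Heden-type Lemma \ref{Heden_lemma}, read directly off the Steiner loop of $S_1$, shows, exactly as in Lemma \ref{PropertiesO}, that $l_r$ depends only on the class $I_j(S_1)\ni r$ and that the induced map $\alpha$, sending $j$ to the common value $l_r$, is a homomorphism from the fundamental-partition $2$-group of $S_1$ into $(0\cup Lin_\nu(S_2),\star)$; this bounds $|\mathcal{T}|$ as in Corollary \ref{upperboundO}.

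It remains to realize $\mathcal{T}$ and match the bound. Here I would show $\langle Ort_{Lin_\nu(S_2)}(S_1^{\perp})\rangle\leq\mathcal{T}$ by checking directly that each $Ort_l(u)$, $l\in Lin_\nu(S_2)$, $u\in S_1^{\perp}$, carries every block of $M(S_1,S_2)$ to a block, case by case over $T_{00},T_{03},T_{30},T_{33}$: blocks whose first-coordinate support misses $\supp(u)$ are fixed, and for the rest the $\nu$-linearity of $l$, via the nucleus identities of Lemma \ref{quasi_identities}, guarantees that replacing a second coordinate $s$ by $s\star l$ on the rows indexed by $\supp(u)$ preserves the $STS(S_2)$-condition, while $u\in S_1^{\perp}$ guarantees the even-intersection parity that preserves the $STS(S_1)$-condition. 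Counting a minimal generating set ($l$ over a basis of the projective subsystem on $Lin_\nu(S_2)$, $u$ over a basis of $S_1^{\perp}$) gives the order predicted by the upper bound, so $\mathcal{T}=\langle Ort_{Lin_\nu(S_2)}(S_1^{\perp})\rangle$; combining with the decomposition above, and noting that $Dub_1(Aut(S_1))$ normalizes $\mathcal{T}$, yields the asserted semidirect-times-direct product. The \emph{main obstacle} is precisely this block-preservation check on $T_{33}$ (and $T_{30}$), where the shift by $\star l$ and the parity forced by $u$ must be reconciled simultaneously; this is the one place where both the $\nu$-linearity of $l$ and the duality of $u$ are indispensable, and where the Steiner-system argument, though shorter than the code version, still demands genuine verification rather than mere transcription.
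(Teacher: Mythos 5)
Your overall plan---transcribing the Section~5 argument with $Aut$ in place of $Sym$, $\nu$ in place of $\mu$, and block preservation over the partition (\ref{MSTSpart}) in place of the $p_1,p_2$ computations---is exactly the route the paper intends (it gives no separate proof, only the remark that the STS proof is ``slightly shorter''). Your handling of the decomposition (the analogue of Lemma \ref{reduce}), of the group ${\mathcal T}$ (Proposition \ref{OGroup} transfers since it only uses $T_{00}$), and of the case-by-case check that $Ort_{l}(u)$ maps each of $T_{00},T_{03},T_{30},T_{33}$ to blocks---where you correctly isolate that $T_{30}$ and $T_{33}$ need both the nucleus identities for $l$ and the even-intersection property of $u\in S_1^{\perp}$---is sound.

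There is, however, one genuine gap: you obtain $l_r\in 0\cup Lin_{\nu}(S_2)$ by invoking ``the $\nu$-analogue of Corollary \ref{coroMu}'' and ``the $\nu$-analogue of Lemma \ref{RowOrbits}''. These analogues are not free and do not ``transfer verbatim''. In the paper, Corollary \ref{coroMu} and Lemma \ref{RowOrbits} rest on Lemma \ref{lemma_muMollard}, a count of \emph{kernel triples} of the Mollard code cited from \cite{MS2}; its content is code-theoretic and says nothing about Pasch configurations. A $\nu$-version would require proving formulas for the number of Pasch configurations through the points $(r,0)$, $(0,s)$, $(r,s)$ of $M(S_1,S_2)$ in terms of $\nu_r(S_1)$ and $\nu_s(S_2)$---a substantive combinatorial computation that you neither carry out nor cite, and without which your second paragraph has no foundation. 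The gap is repairable, and the repair is precisely what makes the STS proof shorter than the code proof: for $\tau\in{\mathcal T}$ you first get $\tau(r,s)=(r,s\star l_r)$ from the images of the $T_{00}$ blocks (as you do), and then apply $\tau$ to a $T_{03}$ block $((r,s),(r,s'),(0,s\cdot s'))$; since $\tau$ fixes $(0,s\cdot s')$, block preservation forces $(s\star l_r)\cdot(s'\star l_r)=s\cdot s'$ for all distinct $s,s'\neq l_r$, which by statement 2 of Lemma \ref{quasi_identities} is exactly the condition $l_r\in Lin_{\nu}(S_2)\cup 0$. No Pasch counting is needed; and since $0\cup Lin_{\nu}(S_2)$ is closed under $\star$ (it is the nucleus of the Steiner loop, cf.\ statement 2 of Theorem 1), the restriction of $\tau$ to the subsystem on the points $\{(r,s): s\in 0\cup Lin_{\nu}(S_2)\}$, which your analogue of Lemma \ref{PropertiesO} requires, remains well defined. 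With this substitution your argument goes through.
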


In work \cite{MS2}  a class of Mollard
codes with symmetry groups, fixing $D^{2}$ fulfilling special
algebraic properties was obtained. By Theorem 2 we have a
description for the symmetry groups of this class.

\end{document}